\newcommand\thmsname{Theorem}
 \newcommand\nm@thmtype{thm}
 \theoremstyle{plain}
 \newenvironment{namedthm}[1]{
   \renewcommand\thmsname{#1}\renewcommand\nm@thmtype{namedtheorem}
   \begin{\nm@thmtype}
}
   {\end{\nm@thmtype}
}
\numberwithin{equation}{section}
\newtheorem{thm}{Theorem}[section]
\newtheorem*{thm*}{Theorem}
\newtheorem{cor}[thm]{Corollary}
\newtheorem*{cor*}{Corollary}
\newtheorem{lem}[thm]{Lemma}
\newtheorem*{lem*}{Lemma}
\newtheorem{prop}[thm]{Proposition}
\newtheorem*{prop*}{Proposition}
\newtheorem*{conjecture*}{Conjecture}
\newtheorem*{fact*}{Conjecture}
\newtheorem*{criterion*}{Criterion}
\newtheorem*{algorithm*}{Algorithm}
\newtheorem*{ax*}{Axiom}
\newtheorem*{assumption*}{Assumption}
\newtheorem*{question*}{Question}
\theoremstyle{remark}
\newtheorem{rem}[thm]{Remark}
\newtheorem*{rem*}{Remark}
\newtheorem*{rems*}{Remarks}
\newtheorem*{claim*}{Claim}
\newtheorem*{exercise*}{Exercise}
\newtheorem*{note*}{Note}
\newtheorem{notation}[thm]{Notation}
\newtheorem*{notation*}{Notation}
\newtheorem*{summary*}{Summary}
\newtheorem*{acknowledgement*}{Acknowledgement}
\newtheorem*{conclusion*}{Conclusion}
\theoremstyle{definition}
\newtheorem{defn}[thm]{Definition}
\newtheorem*{defn*}{Definition}
\newtheorem{example}[thm]{Example}
\newtheorem*{example*}{Example}
\newtheorem*{examples*}{Examples}
\newtheorem*{problem*}{Problem}
\newtheorem*{xca*}{Exercise}
\newtheorem*{xcas*}{Exercises}
\newtheorem*{condition*}{Condition}
\newtheorem{proviso}[thm]{Proviso}
\author[Rolin]{Jean-Philippe Rolin}
\address{Universit\'e de Bourgogne Franche-Comt\'e, IMB,
CNRS UMR 5584,
9 Avenue Savary
BP47870,
F-21078 Dijon Cedex, France}
\email{jean-philippe.rolin@u-bourgogne.fr}
\urladdr{http://rolin.perso.math.cnrs.fr/}
\author[Servi]{Tamara Servi}
\address{Institut de Math\'ematiques de Jussieu -- Paris Rive Gauche \\
	Universit\'{e} Paris Cit\'{e} and Sorbonne Universit\'{e}, CNRS, IMJ-PRG, F-75013 Paris, France}
\email{tamara.servi@imj-prg.fr}
\urladdr{https://tamaraservi.github.io/}
\author[Speissegger]{Patrick Speissegger}
\address {Department of Mathematics and Statistics, McMaster University, 1280
Main Street West, Hamilton, Ontario L8S 4K1, Canada}
\email {speisseg@math.mcmaster.ca}
\begin{document}
\title{Transasymptotic expansions of o-minimal germs}
\begin{abstract}
Given an o-minimal expansion $\mathbb{R}_{\mathcal{A}}$ of the real
ordered field, generated by a generalized quasianalytic class $\mathcal{A}$,
we construct an explicit truncation closed ordered differential field
embedding of the Hardy field of the expansion $\mathbb{R}_{\mathcal{A},\exp}$
of $\mathbb{R}_{\mathcal{A}}$ by the unrestricted exponential function,
into the field $\mathbb{T}$ of transseries. We use this to prove
some non-definability results. In particular, we show that the restriction
to the positive half-line of Euler's Gamma function is not definable
in the structure $\mathbb{R}_{\text{an}^{*},\exp}$, generated by
all convergent generalized power series and the exponential function,
thus establishing the non-interdefinability of the restrictions to
a neighbourhood of $+\infty$ of Euler's Gamma and of the Riemann
Zeta function. 
\end{abstract}

\keywords{o-minimal structures, transseries, resolution of singularities, asymptotic
expansions, quasianalytic classes.}
\subjclass[2000]{03C64, 26E10, 03C10, 12J15}
\maketitle

\section{Introduction }

Given a generalized quasianalytic class $\mathcal{A}$ (see Definition
\ref{def: QA class-1}), we consider the expansion $\mathbb{R}_{\mathcal{A}}$
of the real ordered field generated by the functions in $\mathcal{A}$.
The first two authors proved in \cite{rolin_servi:qeqa} that $\mathbb{R}_{\mathcal{A}}$
is o-minimal, and indeed all examples of polynomially bounded o-minimal
expansions of the real field with smooth cell decomposition known
so far can be presented as reducts of some $\mathbb{R}_{\mathcal{A}}$,
for a suitable $\mathcal{A}$. If $\exp\restriction\left[0,1\right]\in\mathcal{A}$,
a result of \cite{vdd:speiss:multisum} shows that the expansion $\mathbb{R}_{\mathcal{A},\exp}$
of $\mathbb{R}_{\mathcal{A}}$ by the unrestricted exponential function
is also o-minimal. It is well known that the germs at $+\infty$ of
the unary functions definable in an o-minimal expansion $\mathfrak{R}$
of the reals form a Hardy field, which can be made into an elementary
extension of $\mathfrak{R}$. 

In the Main Theorem (Section \ref{sec:The-Main-Theorem} below), we
construct an explicit truncation closed ordered differential field
embedding of the Hardy field $\mathcal{H}\left(\mathbb{R}_{\mathcal{A},\exp}\right)$
of $\mathbb{R}_{\mathcal{A},\exp}$ into the field $\mathbb{T}$ of
logarithmic-exponential series, or \emph{transseries} (defined and
studied in \cite{vdd_macintyre_marker:log-exp_series,adh-bigbook}).
This embedding provides a \emph{transasymptotic expansion} (see Definition
\ref{def: trans expansion}) of a definable germ, in the scale of
transmonomials. The image $\mathbb{T}_{\mathcal{A}}$ of $\mathcal{H}\left(\mathbb{R}_{\mathcal{A},\exp}\right)$
under this embedding is a subfield of $\mathbb{T}$. Understanding
what characterizes the properties of the elements of $\mathbb{T}_{\mathcal{A}}$
-- such as convergence, summability, nature of support or coefficients
-- then helps us establish necessary conditions for a real function
to be definable in $\mathbb{R}_{\mathcal{A},\exp}$ (see Section \ref{sec:Applications-to-non-definability-1}
for examples).

This work was initially motivated by the following question: consider
the restrictions to suitable real neighbourhoods of $+\infty$ of
the Riemann Zeta function $\zeta$ and of Euler's gamma function $\Gamma$.
It is known that neither of these functions is definable in the expansion
$\mathbb{R}_{\text{an},\exp}$ of the real field by restricted analytic
functions and the exponential function \cite{dmm:series}. However,
van den Dries and Speissegger proved that these two functions are
definable, respectively, in the o-minimal structure $\mathbb{R}_{\text{an}^{*},\exp}$,
generated by all convergent generalized power series and the exponential
function \cite{vdd:speiss:gen}, and in the o-minimal structure $\mathbb{R}_{\mathcal{G},\exp}$,
generated by series that are multisummable in the positive real direction
and the exponential function \cite{vdd:speiss:multisum}. They also
proved that the restriction of $\zeta$ to the half-line $\left(1,+\infty\right)$
is not definable in $\mathbb{R}_{\mathcal{G},\exp}$ \cite{vdd:speiss:multisum}.
The following two questions have remained unanswered until very recently:
\begin{enumerate}
\item Is the restriction of $\Gamma$ to the positive half-line definable
in $\mathbb{R}_{\text{an}^{*},\exp}$?
\item If not, is there an o-minimal structure in which both $\zeta\restriction\left(1,+\infty\right)$
and $\Gamma\restriction\left(0,+\infty\right)$ are definable?
\end{enumerate}
Here, we apply our Main Theorem to answer the first question in the
negative (Corollary \ref{cor: applications}(1)), and in \cite{rss:multisummability_generalized}
we answer the second question. Incidentally, we also give here a more
direct proof of the fact that $\zeta\restriction\left(1,+\infty\right)$
is not definable in $\mathbb{R}_{\mathcal{G},\exp}$ (nor in any structure
$\mathbb{R}_{\mathcal{A},\exp}$ such that $\mathcal{A}$ is a \emph{classical
}quasianalytic class, see Definition \ref{def: tr. closed, natural, classical-1}
and Proposition \ref{prop: zeta not def}).

\medskip{}

The Main Theorem relies on the more general Embedding Theorem \ref{thm: embedding}.
The latter is about embedding a certain type of ordered field $\mathcal{F}$
of real germs (not necessarily a Hardy field, and not necessarily
a field of germs definable in an o-minimal structure) into a suitable
ordered Hahn field $\mathbb{R}\left(\left(G\right)\right)$ (not necessarily
a transserial field and not necessarily a field endowed with $\exp$
and $\log$), see Proviso \ref{proviso} and Definition \ref{def: F}.
It is an ordered field embedding $\phi$ which provides a transasymptotic
expansion for the germs in $\mathcal{F}$ in the scale of monomials
in $\phi^{-1}\left(G\right)$.

The field $\mathcal{F}$ is constructed from a group $\mathcal{M}$
of monomial germs (Definition \ref{def: monmial germs}) essentially
by considering the germs of a generalized quasianalytic class $\mathcal{A}$
in restriction to \textquotedblleft monomial curves\textquotedblright .
The proof of the Embedding Theorem \ref{thm: embedding} relies on
two results: Corollary \ref{cor: monom of F} explains how to monomialize
the elements of $\mathcal{F}$ using techniques from local resolution
of singularities; the Splitting Lemma \ref{lem: splitting} explains
how to suitably decompose the elements of $\mathcal{F}$ into the
sum of two germs using the properties of $\mathcal{A}$, and this
is used to prove that the embedding is truncation closed.

\medskip{}

The proof of the Main Theorem uses a general quantifier elimination
result for structures of type $\mathbb{R}_{\mathcal{A}}$ proved in
\cite{rolin_servi:qeqa}. This result is what allows us to describe
the elements of $\mathcal{H}\left(\mathbb{R}_{\mathcal{A}}\right)$
first, and then, using \cite[Theorem B]{vdd:speiss:multisum}, the
elements of $\mathcal{H}\left(\mathbb{R}_{\mathcal{A},\exp}\right)$,
as \emph{terms} of a suitable language, involving symbols for the
functions in $\mathcal{A}$ and symbols for $\exp$ and $\log$ (Theorem
\ref{thm: strong QE-1}). There is a natural way to associate a transseries
to each such term, yielding a direct connection between germs and
transseries: given a germ $f\in\mathcal{H}\left(\mathbb{R}_{\mathcal{A},\exp}\right)$,
we may choose a term $t_{f}$ representing it and consider the transseries
naturally associated to the term $t_{f}$. There are, however, multiple
choices possible for the term $t_{f}$, and it is not clear a priori
that any of these choices would provide a well defined order-preserving
map from $\mathcal{H}\left(\mathbb{R}_{\mathcal{A},\exp}\right)$
to $\mathbb{T}$ (see Example \ref{exa: tilde and hat-1}). We show
that there is a suitable choice of $t_{f}$, for all $f\in\mathcal{H}\left(\mathbb{R}_{\mathcal{A},\exp}\right)$,
leading to the desired embedding.

\medskip{}

The paper is organized as follows. In Section \ref{sec: general objects and results}
we define the notion of transasymptotic expansion for real germs.
In Section \ref{sec:Generalized-quasianalytic-class} we introduce
generalized quasianalytic classes and state and prove the Embedding
Theorem \ref{thm: embedding}. Section \ref{sec:The-Main-Theorem}
is devoted to the statement and proof of the Main Theorem. In Section
\ref{sec:Applications-to-non-definability-1} we give some applications
of the Main Theorem.

\section{\label{sec: general objects and results}Transasymptotic expansions
of real germs}
\begin{notation*}
Throughout this paper, lower-case letters $x,x_{i},y,\ldots$ will
denote \textquotedblleft geometric\textquotedblright{} variables,
ranging in subsets of $\mathbb{R}^{n}$ for some $n>0$, and upper-case
letters $X,X_{i},Y,\ldots$ will denote \textquotedblleft formal\textquotedblright{}
variables, appearing in formal series and transseries.

The geometric variables $x,x_{i},\ldots$ will usually range over
a neighbourhood of $0\in\mathbb{R}^{n}$, whereas the geometric variable
$y$ will range over some neighbourhood of $+\infty$ in $\mathbb{R}$.
Accordingly, the formal variables $X,X_{i},\ldots$ will be positive
and infinitely small, whereas $Y$ will be positive and infinitely
large, with respect to $\mathbb{R}$.
\end{notation*}

\subsection{Generalized power series\label{subsec:Generalized-power-series}}

For $X=\left(X_{1},\ldots,X_{\ell}\right)$, we let $\mathbb{R}\left\llbracket X^{*}\right\rrbracket $
be the $\mathbb{R}$-algebra of \emph{generalized power series }of
the form
\begin{equation}
F\left(X\right)=\sum_{\mathbf{r}=\left(r_{1},\ldots,r_{N}\right)\in[0,+\infty)^{\ell}}a_{\mathbf{r}}X_{1}^{r_{1}}\cdots X_{\ell}^{r_{\ell}},\label{eq:gen seris}
\end{equation}
where $a_{\mathbf{r}}\in\mathbb{R}$ and $\text{Supp}\left(F\right):=\left\{ \mathbf{r}:\ a_{\mathbf{r}}\not=0\right\} $
is contained in a cartesian product of $\ell$ well-ordered subsets
of $[0,+\infty$). If $m\leq N$, we also consider the subring $\mathbb{R}\left\llbracket X_{1}^{*},\ldots,X_{m}^{*},X_{m+1},\ldots,X_{\ell}\right\rrbracket $
of $\mathbb{R}\left\llbracket X^{*}\right\rrbracket $ consisting
in all generalized power series whose support is contained in $[0,+\infty)^{m}\times\mathbb{N}^{\ell-m}$.

The basic properties of generalized power series can be found in \cite[Section 4]{vdd:speiss:gen}.

\subsection{Ordered groups with powers\label{subsec:Ordered-groups-with}}
\begin{defn}[Ordered groups with $\mathbb{K}$-powers]
\label{def: ordered group with powers} Let $\left(G;\ 1,\cdot,<\right)$
be an abelian totally ordered group (denoted multiplicatively) and
$\mathbb{K}\subseteq\mathbb{R}$ be a subfield. We say that $G$ is
an \emph{ordered group with $\mathbb{K}$-powers} if $G$ is an ordered
$\mathbb{K}$-vector space (where scalar multiplication is denoted
exponentially):
\begin{itemize}
\item $\forall g\in G,\ g^{0}=1$ and $g^{1}=g$
\item $\forall g_{1},g_{2}\in G,\ \forall r\in\mathbb{K},\ \left(g_{1}\cdot g_{2}\right)^{r}=g_{1}^{r}\cdot g_{2}^{r}$
\item $\forall g_{1},g_{2}\in G,\ \forall r\in\mathbb{K}_{\geq0},\ g_{1}\leq g_{2}\Longrightarrow g_{1}^{r}\leq g_{2}^{r}$
\item $\forall g\in G,\ \forall r,s\in\mathbb{K},\ g^{r+s}=g^{r}\cdot g^{s}$
\item $\forall g\in G,\ \forall r,s\in\mathbb{K},\ \left(g^{r}\right)^{s}=g^{rs}$
\end{itemize}
We denote $G^{<1}=\left\{ g\in G:\ g<1\right\} $ and $G^{>1}=\left\{ g\in G:\ g>1\right\} $.
\end{defn}
\begin{rem}
\label{rem:K-vector space}It follows from the definition that, if
$r\in\mathbb{K}^{*}$, then for all $g\in G,\ \left(g^{r}\right)^{\frac{1}{r}}=\left(g^{\frac{1}{r}}\right)^{r}=g$
and $g^{r}\cdot g^{-r}=1$. Hence, $g\longmapsto g^{r}$ is a group
isomorphism, which is order-preserving if $r>0$ and order-reversing
if $r<0$. In particular, if $r,s\in\mathbb{K}$ and $g\in G^{<1}$,
then $r<s\Longrightarrow g^{r}>g^{s}$.
\end{rem}
\begin{defn}[Hahn series]
\label{def: Hahn series}Given an ordered Abelian group $G$, we
denote by $\mathbb{R}\left(\left(G\right)\right)$ the \emph{Hahn
field} with monomials in $G$ and coefficients in $\mathbb{R},$ whose
elements can be written as
\[
\sigma=\sum_{g\in G}b_{g}g,
\]
where $b_{g}\in\mathbb{R}$ for each $g\in G$, and the support $\text{Supp\ensuremath{\left(\sigma\right)}:=\ensuremath{\left\{  g\in G:\ b_{g}\not=0\right\} } }$
is a reverse well-ordered subset of $G$. Hence, $\sigma$ can also
be written as $\sum_{\gamma<\gamma_{0}}c_{\gamma}g_{\gamma}$, where
$\gamma_{0}$ is an ordinal, $c_{\gamma}\in\mathbb{R}\setminus\left\{ 0\right\} $
and the sequence $\left\{ g_{\gamma}\right\} _{\gamma<\gamma_{0}}\subseteq G$
is strictly decreasing. Therefore $g_{0}=\max\ \text{Supp}\left(\sigma\right)$
and $c_{0}=b_{g_{0}}$. We define the leading monomial, the leading
coefficient and the leading term of $\sigma$, respectively, as
\[
\text{lm}\left(\sigma\right)=g_{0},\ \text{lc}\left(\sigma\right)=c_{0},\ \text{lt}\left(\sigma\right)=c_{0}g_{0}.
\]
\end{defn}
\begin{rem}
\label{rem: action of gen series on groups}Let $G$ be an ordered
group with $\mathbb{K}$-powers and $F\in\mathbb{R}\left\llbracket X_{1}^{*},\ldots,X_{\ell}^{*}\right\rrbracket $
such that $\text{Supp}\left(F\right)\subseteq\left(\mathbb{K}_{\geq0}\right)^{\ell}$.
Then $F$ acts on $\left(G^{<1}\right)^{\ell}$ is the following way:
if $F$ is as in \eqref{eq:gen seris} and $g_{1},\ldots,g_{\ell}\in G^{<1}$,
then
\[
F\left(g_{1},\ldots,g_{\ell}\right)=\sum_{g\in G}b_{g}g\in\mathbb{R}\left(\left(G\right)\right),
\]
where
\[
b_{g}=\left(\sum_{\mathbf{r}\in\text{Supp}\left(F\right):\ \prod_{i=1}^{\ell}g_{i}^{r_{i}}=g}a_{\mathbf{r}}\right)
\]
and the above sum is finite by Remark \ref{rem:K-vector space} and
Neumann's Lemma \cite{neumann:ordered_division_rings}.
\end{rem}

\subsection{Transasymptotic scales of germs\label{subsec:Transasymptotic-scales-of}}
\begin{defn}
\label{def: order on germs}We let $\mathcal{R}$ be the ring of all
germs at $+\infty$ of real functions (with respect to pointwise sum
and product). We introduce two partial orders on $\mathcal{R}$: for
$f,g\in\mathcal{R}$,
\begin{itemize}
\item $f\leq g\Longleftrightarrow$ there are representatives $f_{0},g_{0}$
of $f,g$ respectively on some half-line $\left(a,+\infty\right)\subseteq\mathbb{R}$
such that
\[
\forall x\in\left(a,+\infty\right),\ f_{0}\left(x\right)\leq g_{0}\left(x\right)\ \ \text{\ensuremath{\left(\mathrm{pointwise\ order}\right)}}.
\]
\item $f\preceq g\Longleftrightarrow{\displaystyle \lim_{x\rightarrow+\infty}\frac{f\left(x\right)}{g\left(x\right)}}\in\mathbb{R}$
$\ \ \ $(valuation order).
\end{itemize}
Thus $f\prec g\Longleftrightarrow f=o_{+\infty}\left(g\right)$ and
$f\asymp g\Longleftrightarrow\left(f\preceq g\text{ and }g\preceq f\right)\Longleftrightarrow\exists c\in\mathbb{R}^{*}$
such that $\lim_{x\rightarrow+\infty}\frac{f\left(x\right)}{g\left(x\right)}=c$.

We write $f\sim_{+\infty}g$ if $\lim_{x\rightarrow+\infty}\frac{f\left(x\right)}{g\left(x\right)}=1$.
\end{defn}
\begin{defn}[Monomials]
\label{def: monmial germs}Let $\mathcal{M}\subseteq\mathcal{R}$.
We say that $\mathcal{M}$ is a \emph{group of monomials} if:
\begin{itemize}
\item The germs in $\mathcal{M}$ take values in $\left(0,+\infty\right)$.
\item $\mathcal{M}$ is a totally ordered (with respect to the valuation
order) multiplicative subgroup of $\mathcal{R}$.
\end{itemize}
It follows that $\mathcal{M}$ is also totally ordered with respect
to the pointwise order and the two orders coincide on $\mathcal{M}$:
\[
\forall\mathfrak{m},\mathfrak{n}\in\mathcal{M},\ \mathfrak{m}<\mathfrak{n}\Longleftrightarrow\mathfrak{m}\prec\mathfrak{n}.
\]

\end{defn}
\begin{defn}[Transasymptotic scale]
\label{def: asymptotic scale}Given a group of monomials $\mathcal{M}\subseteq\mathcal{R}$
and an ordinal $\gamma$, we say that a sequence $\left(\mathfrak{m}_{\alpha}\right)_{\alpha<\gamma}\subseteq\mathcal{M}$
is a \emph{transasymptotic scale} if
\[
\forall\alpha<\beta<\gamma,\ \mathfrak{m}_{\beta}\prec\mathfrak{m}_{\alpha}.
\]
\end{defn}
\begin{defn}[Transasymptotic expansion]
\label{def: trans expansion}Let $\mathcal{M}\subseteq\mathcal{R}$
be a group of monomials and $\mathcal{F}\subseteq\mathcal{R}$ be
an ordered (with respect to the pointwise order) subfield of $\mathcal{R}$
containing $\mathcal{M}$. Let $G$ be an ordered group with $\mathbb{K}$-powers
(see Definition \ref{def: ordered group with powers}) and suppose
that $\phi:\mathcal{F}\longrightarrow\mathbb{R}\left(\left(G\right)\right)$
is an ordered field embedding mapping $\mathcal{M}$ into $G$.

We say that $\phi$ is \emph{truncation closed} if for every $\sigma=\sum_{\alpha<\gamma}c_{\alpha}g_{\alpha}\in\text{Im}\left(\phi\right)$
and every ordinal $\beta<\gamma$, there exists a (necessarily unique)
germ $f_{\beta}\in\mathcal{F}$ such that
\[
\phi\left(f_{\beta}\right)=\sum_{\alpha<\beta}c_{\alpha}g_{\alpha}.
\]
In particular, for all $\alpha<\gamma$, $g_{\alpha}\in\text{Im}\left(\phi\right)$
and $\left(\phi^{-1}\left(g_{\alpha}\right)\right)_{\alpha<\gamma}$
is a transasymptotic scale.

Finally, a truncation closed embedding $\phi$ as above provides a
\emph{transasymptotic expansion} for the germs in $\mathcal{F}$ (with
real coefficients and transasymptotic scales in $\mathcal{M}$) if
for every $f\in\mathcal{F}$, writing $\phi\left(f\right)=\sum_{\alpha<\gamma}c_{\alpha}g_{\alpha}$,
we have, for every $\beta<\gamma$, 
\[
f-f_{\beta}\sim_{+\infty}c_{\beta}\phi^{-1}\left(g_{\beta}\right),
\]
where $f_{\beta}$ is the unique germ such that $\phi\left(f_{\beta}\right)=\sum_{\alpha<\beta}c_{\alpha}g_{\alpha}$.
\end{defn}

\section{Generalized Quasianalytic Classes\label{sec:Generalized-quasianalytic-class}}
\begin{defn}[Generalized quasianalytic class (GQC)]
\label{def: QA class-1} A \emph{generalized quasianalytic class
(GQC)} is a collection
\[
\mathcal{A}=\left\{ \mathcal{A}_{m,n,\mathbf{r}}:\ m,n\in\mathbb{N},\ \mathbf{r}=\left(r_{1},\ldots,r_{m+n}\right)\in(0,+\infty)^{m+n}\right\} 
\]
of $\mathbb{R}$-algebras of functions 
\[
f:[0,r_{1})\times\cdots\times[0,r_{m})\times\left(-r_{m+1},r_{m+1}\right)\times\cdots\times\left(-r_{m+n},r_{m+n}\right)\longrightarrow\mathbb{R}
\]
which are continuous on their domain and $C^{1}$ on the interior
of their domain, and which satisfy the properties listed in \cite[Proviso 1.20]{rolin_servi:qeqa}.
In particular, if $\mathcal{A}_{m,n}$ denotes the collection of germs
at the origin of $\mathcal{A}_{m,n,\mathbf{r}}$ (for $\mathbf{r}$
in $\left(0,+\infty\right)^{m+n}$), then there is an \textbf{injective}
$\mathbb{R}$-algebra morphism
\begin{equation}
\widehat{}:\mathcal{A}_{m,n}\longrightarrow\mathbb{R}\left\llbracket X_{1}^{*},\ldots,X_{m}^{*},X_{m+1},\ldots,X_{m+n}\right\rrbracket \label{eq:qa morphism hat-1}
\end{equation}
which is compatible with certain operations involved in resolution
of singularities (see \cite[1.15]{rolin_servi:qeqa}), such as ramifications,
blow-ups, monomial division and a restricted form of composition.
\end{defn}
\begin{rem}
\label{rem: gen variables}In what follows there is no harm in assuming
that all variables are generalized, i.e. $n=0$ and $X^{*}=\left(X_{1},\ldots,X_{m}\right)$.
Hence, by an abuse of notation, we denote by $\mathcal{A}_{m}$ the
collection of germs $\mathcal{A}_{m,0}$. We also write $f\in\mathcal{A}$
as a shorthand for $f\in\mathcal{A}_{m},$ for some $m\in\mathbb{N}^{*}$. 
\end{rem}
\begin{defn}
We denote by $\widehat{\mathcal{A}_{m}}\subseteq\mathbb{R}\left\llbracket X^{*}\right\rrbracket $
the image of the quasianalyticity morphism $\ \widehat{}\ $ in \ref{eq:qa morphism hat-1}.
We let $\mathbb{K}\subseteq\mathbb{R}$ be the field generated by
all the exponents appearing in the support of the series in $\bigcup_{m\in\mathbb{N}}\widehat{\mathcal{A}_{m}}$
(we call $\mathbb{K}$ the \emph{field of exponents} of $\mathcal{A}$).
\end{defn}
As mentioned in the introduction, most known polynomially bounded
o-minimal expansions of the reals are generated by some such class.
Let us recall the main examples: 
\begin{itemize}
\item $\mathbb{R}_{\text{an}}$, generated by all real analytic functions
restricted to a compact neighbourhood of the origin \cite{vdd:d}.
\item $\mathbb{R}_{\text{an}^{*}}$, generated by all convergent generalized
power series \cite{vdd:speiss:gen}
\item $\mathbb{R}_{\mathcal{G}}$, generated by all series which are multisummable
in the positive real direction \cite{vdd:speiss:multisum}
\item $\mathbb{R}_{\mathcal{C}\left(M\right)}$, generated by a quasianalytic
Denjoy-Carleman class $\mathcal{C}\left(M\right)$ \cite{rsw}
\item $\mathbb{R}_{\text{an},H}$, generated by a strongly quasianalytic
solution $H$ of a first-order analytic differential equation which
is singular at the origin \cite{rss}
\item $\mathbb{R}_{\mathcal{Q}}$, generated by a collection of functions
(including certain Dulac transition maps) which admit an asymptotic
expansion at the origin in the scale of real nonnegative powers \cite{krs}
\item $\mathbb{R}_{\mathcal{G}^{*}}$, generated by all functions that are
generalized multisummable in the positive real direction\cite{rss:multisummability_generalized}
\end{itemize}
\begin{defn}
\label{def: tr. closed, natural, classical-1} A set $A\subseteq\mathbb{R}$
is \emph{natural} if for every $a\in\mathbb{R}$, the set $A\cap\left(-\infty,a\right)$
is finite. 

A generalized power series $F=\sum_{\alpha\in[0,+\infty)^{N}}c_{\alpha}X^{\alpha}\in\mathbb{R}\left\llbracket X^{*}\right\rrbracket $
is \emph{convergent }if there exists $r>0$ such that $\sum_{\alpha\in[0,+\infty)^{N}}\left|c_{\alpha}\right|r^{\left|\alpha\right|}<+\infty$
(in other words, the family of functions $\left\{ c_{\alpha}x^{\alpha}:\ \alpha\in\text{Supp}\left(F\right)\right\} $
is uniformly summable on $\left[0,r\right]^{N}$). We denote by $\mathbb{R}\left\{ X^{*}\right\} \subseteq\mathbb{R}\left\llbracket X^{*}\right\rrbracket $
the collection of convergent generalized power series.

Let $\mathcal{A}$ be a GQC. We say that:
\begin{itemize}
\item $\mathcal{A}$ is \emph{truncation closed }if, given $f\in\mathcal{A}_{1+m}$
and $\alpha_{0}\in[0,+\infty)$, writing 
\[
\widehat{f}\left(X\right)=\sum_{\alpha\in[0,+\infty)}c_{\alpha}\left(X_{1},\ldots,X_{m}\right)X_{0}^{\alpha},
\]
 we have
\[
\sum_{\alpha<\alpha_{0}}c_{\alpha}\left(X_{1},\ldots,X_{m}\right)X_{0}^{\alpha}\in\widehat{\mathcal{A}_{1+m}}.
\]
By \cite[1.15(3)]{rolin_servi:qeqa} the above property also holds
when replacing $X_{0}$ by another choice of variable $X_{j}$.
\item $\mathcal{A}$ is \emph{natural} if for all $f\in\mathcal{A}_{m}$,
the support of $\widehat{f}$ is contained in a Cartesian product
of $m$ natural subsets of $[0,+\infty)$.
\item $\mathcal{A}=\text{an}^{*}$ if $\forall m\in\mathbb{N},\ \mathcal{A}_{m}=\mathbb{R}\left\{ X_{1}^{*},\ldots,X_{m}^{*}\right\} $.
\item $\mathcal{A}$ is \emph{classical} if $\forall m\in\mathbb{N},\ \widehat{\mathcal{A}_{m}}\subseteq\mathbb{R}\left\llbracket X\right\rrbracket $,
i.e. the image of the quasianalyticity morphism only contains power
series with integer exponents.
\end{itemize}
\end{defn}
All the examples of generalized quasianalytic classes given above
are truncation closed, and all of them are natural, with the exception
of $\mathcal{A}=\text{an}^{*}$. 
\begin{proviso}
\label{proviso}For the rest of the section, we fix a GQC $\mathcal{A}$
with field of exponents $\mathbb{K}$. We suppose that either $\mathcal{A}=\text{an}^{*}$
or $\mathcal{A}$ is natural and truncation closed.

We also fix an ordered group $G$ with $\mathbb{K}$-powers (see Definition
\ref{def: ordered group with powers}) and a group of monomials $\mathcal{M}\subseteq\mathcal{R}$
(see Definition \ref{def: monmial germs}) which is stable under $\mathbb{K}$-powers
(i.e. for all $\mathfrak{m}\in\mathcal{M}$ and all $r\in\mathbb{K}$,
$\mathfrak{m}^{r}\in\mathcal{M}$). Note that such an $\mathcal{M}$
is also an ordered group with $\mathbb{K}$-powers and $\mathcal{A}$
acts on $\mathcal{M}^{<1}$ by composition, whereas $\widehat{\mathcal{A}}$
acts on $G^{<1}$ in the way explained in Remark \ref{rem: action of gen series on groups}.

We suppose furthermore that there exists an ordered group embedding
$\phi:\mathcal{M}\longrightarrow G$ which respects $\mathbb{K}$-powers,
i.e. for all $\mathfrak{m}\in\mathcal{M}$ and all $r\in\mathbb{K}$,
$\phi\left(\mathfrak{m}^{r}\right)=\left(\phi\left(\mathfrak{m}\right)\right)^{r}$.
\end{proviso}
\begin{notation}
\label{notation A}Define $\mathbb{A}$ as $\mathbb{N}$ or $\mathbb{K}_{\geq0}$,
depending on whether $\mathcal{A}$ is a classical or a non-classical
GQC. 

Given a ring $\mathcal{B}$, we denote by $\mathcal{B}^{\times}$
the subset of $\mathcal{B}$ consisting of the invertible elements.
\end{notation}
\begin{lem}
\label{lem: units are units-1}Let $a\in\mathcal{A}_{n}$. Then $a\left(0\right)=c\in\mathbb{R}$
if and only if the constant term of the series $\widehat{a}$ is $c$.

In particular, $a\in\mathcal{A}_{n}^{\times}\Longleftrightarrow\widehat{a}\in\widehat{\mathcal{A}_{n}}^{\times}$.
\end{lem}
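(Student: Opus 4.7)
The first equivalence reduces to the following key claim, which I denote by $(\star)$: if $b \in \mathcal{A}_n$ and $\widehat{b}$ has zero constant term, then $b(0) = 0$. Granting $(\star)$, the equivalence follows at once: given $a \in \mathcal{A}_n$, let $d \in \mathbb{R}$ be the constant term of $\widehat{a}$. Since $\mathcal{A}_n$ is an $\mathbb{R}$-algebra, the constant germ $d$ lies in $\mathcal{A}_n$, and its image under the quasianalyticity morphism is the constant series $d$; hence $\widehat{a - d} = \widehat{a} - d$ has zero constant term. Applying $(\star)$ to $b := a - d$ gives $(a - d)(0) = 0$, i.e.\ $a(0) = d$, which establishes both implications of the first equivalence simultaneously.

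To prove $(\star)$, I distinguish the two cases of Proviso~\ref{proviso}. If $\mathcal{A} = \text{an}^{*}$, then $b$ is literally the sum of the convergent generalized power series $\widehat{b}$ near the origin, so $b(0)$ equals the value of $\widehat{b}$ at zero, namely its constant term, which is zero. When $\mathcal{A}$ is natural and truncation closed, I reduce to one variable via a diagonal substitution: using the composition/blow-up properties from Proviso~1.20 of \cite{rolin_servi:qeqa}, the germ $\beta(t) := b(t, \ldots, t)$ lies in $\mathcal{A}_1$, with $\widehat{\beta}(t) = \widehat{b}(t, \ldots, t)$, whose constant term equals that of $\widehat{b}$ and hence is zero. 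Since $\mathcal{A}$ is natural, the support of $\widehat{\beta}$ is a natural subset of $(0, +\infty)$ and thus has a minimum $\alpha_0 > 0$, so $\widehat{\beta}(t) = t^{\alpha_0} F(t)$ for some $F \in \widehat{\mathcal{A}_1}$. The monomial division property of $\mathcal{A}$ (combined with a ramification if $\alpha_0 \notin \mathbb{N}$) then produces $g \in \mathcal{A}_1$ with $\widehat{g} = F$ and $\beta(t) = t^{\alpha_0} g(t)$, so $b(0) = \beta(0) = 0$.

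For the ``in particular'' statement, I would argue directly from the fact that the quasianalyticity morphism is an $\mathbb{R}$-algebra isomorphism onto $\widehat{\mathcal{A}_n}$, independently of the first part. If $ab = 1$ in $\mathcal{A}_n$ then $\widehat{a}\,\widehat{b} = 1$, so $\widehat{a} \in \widehat{\mathcal{A}_n}^{\times}$. Conversely, if $\widehat{a}\,F = 1$ for some $F \in \widehat{\mathcal{A}_n}$, write $F = \widehat{b}$ for the unique $b \in \mathcal{A}_n$ with $\widehat{b} = F$; then $\widehat{ab} = \widehat{1}$ and injectivity of the morphism forces $ab = 1$, so $a \in \mathcal{A}_n^{\times}$.

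The main obstacle is the key claim $(\star)$ in the natural GQC case: the reduction to one variable via the diagonal substitution and the subsequent factorization of $t^{\alpha_0}$ both rely on specific closure properties of $\mathcal{A}$ (composition, monomial division, ramification) that must be carefully invoked from Proviso~1.20 of \cite{rolin_servi:qeqa}. The $\text{an}^{*}$ case and the ``in particular'' statement are essentially formal.
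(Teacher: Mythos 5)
Your proof is correct and takes a genuinely different route from the paper's. The paper handles the case $n>1$ by invoking the ``finite presentation'' of a generalized power series from \cite[Lemma 4.8]{vdd:speiss:gen} (writing $\widehat{a}=X^{\alpha_0}(X^{\alpha_1}B_1+\cdots+X^{\alpha_s}B_s)$ with $\gcd(X^{\alpha_1},\ldots,X^{\alpha_s})=1$) and then inducting: either a common monomial $X^{\alpha_0}\neq 1$ can be factored out (and monomial division finishes), or some variables can be set to zero via \cite[1.15(4)]{rolin_servi:qeqa}, strictly reducing the dimension. You instead collapse all variables at once via the diagonal substitution $\beta(t)=b(t,\ldots,t)$, and then run the one-variable argument (which is essentially identical in both proofs). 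Your reduction is shorter and cleaner, but it leans on a closure property -- stability of the class and compatibility of $\widehat{\ }$ under the substitution $(X_1,\ldots,X_n)\mapsto (t,\ldots,t)$ -- that is a particular instance of the monomial substitutions used in the paper's Monomialization Lemma (via \cite[1.15(1), Lemma 1.19]{rolin_servi:qeqa}). Since the paper itself performs substitutions of the form $\widehat{a}(Z^{\beta_2},Z^{\alpha_1},X_3,\ldots,X_\ell)$, the diagonal case is indeed covered, so your appeal to those properties is legitimate; you are right to flag it as the point requiring the most care. Two minor remarks: the appeal to naturality is superfluous (any well-ordered subset of $[0,+\infty)$ has a minimum, and $\alpha_0\in\mathbb{A}$ automatically since the exponents of $\widehat{\beta}$ are sums of exponents of $\widehat{b}$, so no ramification is ever needed); and your argument for the ``in particular'' clause is the formal one (the quasianalyticity morphism is a ring isomorphism onto its image), which is fine but does not actually use the first part of the lemma -- the paper's phrasing suggests instead the chain $a\in\mathcal{A}_n^\times \Leftrightarrow a(0)\neq 0 \Leftrightarrow \widehat{a}$ has nonzero constant term $\Leftrightarrow \widehat{a}\in\widehat{\mathcal{A}_n}^\times$, using the closure of $\mathcal{A}$ under inverses of units.
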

\begin{proof}
Suppose first that the constant term of $\widehat{a}$ is zero. 

If $n=1$ then $\widehat{a}\left(X\right)=X^{\alpha}B\left(X\right)$,
for some $\alpha\in\mathbb{A}_{>0}$ and $B\in\mathbb{R}\left\llbracket X^{*}\right\rrbracket $.
By Monomial Division \cite[1.15(2)]{rolin_servi:qeqa}, there exists
$b\in\mathcal{A}_{1}$ such that $a\left(x\right)=x^{\alpha}b\left(x\right)$.
In particular $a\left(0\right)=0$, since $\alpha>0$.

If $n>1$, let $X=\left(X_{1},\ldots,X_{n}\right)$. By the well-order
property of the support of $\widehat{a}$ (see \cite[Lemma 4.8]{vdd:speiss:gen}),
there are $s\in\mathbb{N}^{*}$, exponents $\alpha_{i}\in\mathbb{A}^{n}$,
for $0\leq i\leq s$, and generalized power series $B_{i}\in\mathbb{R}\left\llbracket X^{*}\right\rrbracket $
with nonzero constant term, for $1\leq i\leq s$ (not necessarily
in the image of the morphism $\ \widehat{}\ $), such that $\text{gcd}\left(X^{\alpha_{1}},\ldots,X^{\alpha_{s}}\right)=1$
and 
\begin{equation}
\widehat{a}\left(X\right)=X^{\alpha_{0}}\left(X^{\alpha_{1}}B_{1}\left(X\right)+\cdots+X^{\alpha_{s}}B_{s}\left(X\right)\right).\label{eq: finite presentation-1}
\end{equation}
If $X^{\alpha_{0}}\not=1$ then by Monomial Division \cite[1.15(2)]{rolin_servi:qeqa}
there exists $b\in\mathcal{A}_{n}$ such that $a\left(x\right)=x^{\alpha_{0}}b\left(x\right)$.
In particular, $a\left(0\right)=0$.

If $X^{\alpha_{0}}=1$, then up to permutation we may suppose that
there is $n_{1}<n$ such that $\alpha_{1,j}\not=0\Longleftrightarrow j\leq n_{1}$.
If $X'=\left(X_{1},\ldots,X_{n_{1}}\right)$, then by \cite[1.15(4)]{rolin_servi:qeqa}
we may repeat the argument and write \eqref{eq: finite presentation-1}
for $\widehat{a_{1}}\left(X'\right):=\widehat{a}\left(X',0\right)$:
either we can factor out a common monomial or eventually we reduce
to the case $n=1$.

Now suppose that the constant term of $\widehat{a}$ is $c\not=0$.
Hence $\widehat{b}:=\widehat{a}-c$ has zero constant term. By the
first part of the proof, so does $b:=a-c$. Hence $a\left(0\right)=c$.

Finally, if $a\left(0\right)=c\not=0$ then by the previous paragraph
$\widehat{a-c}$ has zero constant term, hence the constant term of
$\widehat{a}$ is $c$.
\end{proof}

\subsection{Monomialization\label{subsec:Monomialization}}
\begin{notation}
For $B\subseteq\mathbb{K}$ and $\mathfrak{m}_{1},\ldots\mathfrak{m_{\ell}}\in\mathcal{M}$,
we let $\langle\mathfrak{m}_{1},\ldots,\mathfrak{m}_{\ell}\rangle^{B}$
be the multiplicative group generated by $\left\{ \mathfrak{m}_{i}^{\alpha_{i}}:\ i=1,\ldots,\ell,\ \alpha_{i}\in B\right\} $.
\end{notation}
\begin{lem}[Monomialization]
\label{lem: monomialization-1}Let $a\in\mathcal{A}_{\ell}$ and
$\overline{\mathfrak{m}}=\left(\mathfrak{m}_{1},\ldots,\mathfrak{m}_{\ell}\right)\mathrm{\ with\ }\mathfrak{m}_{i}\in\mathcal{M}^{<1}$.
\begin{enumerate}
\item If $\widehat{a}\left(\phi\left(\overline{\mathfrak{m}}\right)\right)\not=0$
then $a\left(\overline{\mathfrak{m}}\right)\not=0$ and there are
$k\in\mathbb{N},\mathfrak{n}_{0}\in\mathcal{M},\overline{\mathfrak{n}}=\left(\mathfrak{n}_{1},\ldots,\mathfrak{n}_{k}\right)\mathrm{\ with\ }\mathfrak{n}_{i}\in\mathcal{M}^{<1}$
and $U\in\mathcal{A}_{k}^{\times}$ such that
\[
a\left(\overline{\mathfrak{m}}\right)=\mathfrak{n}_{0}U\left(\overline{\mathfrak{n}}\right)\mathrm{\ and\ }\widehat{a}\left(\phi\left(\overline{\mathfrak{m}}\right)\right)=\phi\left(\mathfrak{n}_{0}\right)\widehat{U}\left(\phi\left(\overline{\mathfrak{n}}\right)\right).
\]
Moreover, $\mathfrak{n}_{i}\in\langle\mathfrak{m}_{1},\ldots,\mathfrak{m}_{\ell}\rangle^{\mathbb{K}}$
(for $i=0,\ldots,k$).
\item If $\widehat{a}\left(\phi\left(\overline{\mathfrak{m}}\right)\right)=0$
then $a\left(\overline{\mathfrak{m}}\right)=0$.
\end{enumerate}
\end{lem}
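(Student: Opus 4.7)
The plan is to prove part~(1) by induction on the combinatorial complexity of the support of $\widehat{a}$, following the blueprint of Lemma~\ref{lem: units are units-1} but now keeping track of the evaluations at $\overline{\mathfrak{m}}$ and $\phi(\overline{\mathfrak{m}})$. The engine of the induction is alternation between Monomial Division and admissible blow-ups for $\mathcal{A}$ (axioms \cite[1.15(1)--(2)]{rolin_servi:qeqa}), and part~(2) will drop out at the end by contrapositive.

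First, I would dispose of the case $\widehat{a}=0$, which forces $a=0$ by injectivity of the hat morphism and makes both statements trivial. Assuming $\widehat{a}\neq 0$, following the proof of Lemma~\ref{lem: units are units-1}, I would decompose
\[
\widehat{a}(X)=X^{\alpha_0}\bigl(X^{\alpha_1}B_1(X)+\cdots+X^{\alpha_s}B_s(X)\bigr),
\]
with $s\geq 1$, units $B_i\in\mathbb{R}\llbracket X^*\rrbracket^\times$, and the monomials $X^{\alpha_i}$ pairwise incomparable in the divisibility order. Monomial Division yields $a(x)=x^{\alpha_0}b(x)$ with $\widehat{b}=X^{\alpha_1}B_1+\cdots+X^{\alpha_s}B_s$. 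In the base case $s=1$, a second Monomial Division produces $b(x)=x^{\alpha_1}U(x)$ with $\widehat{U}=B_1$, hence $U\in\mathcal{A}_\ell^\times$ by Lemma~\ref{lem: units are units-1}. Taking $\mathfrak{n}_0:=\overline{\mathfrak{m}}^{\alpha_0+\alpha_1}\in\mathcal{M}$ (using stability of $\mathcal{M}$ under $\mathbb{K}$-powers), $\overline{\mathfrak{n}}:=\overline{\mathfrak{m}}$ and $k:=\ell$, the two required identities follow from Remark~\ref{rem: action of gen series on groups} together with the fact that $\phi$ respects $\mathbb{K}$-powers.

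For the inductive step $s\geq 2$, I would reduce complexity by a blow-up. The total order on $\mathcal{M}$ supplies $i\neq j$ with $\overline{\mathfrak{m}}^{\alpha_i}\leq\overline{\mathfrak{m}}^{\alpha_j}$; the incomparability of $X^{\alpha_i}$ and $X^{\alpha_j}$ provides indices $p,q$ with $\alpha_{i,p}>\alpha_{j,p}$ and $\alpha_{i,q}<\alpha_{j,q}$, which I would choose so that $\mathfrak{m}_p\leq\mathfrak{m}_q$. Applying the blow-up $X_p\mapsto X_pX_q$ from axiom \cite[1.15(1)]{rolin_servi:qeqa} produces $a'\in\mathcal{A}_\ell$ with $\widehat{a'}(X)=\widehat{a}(X_1,\ldots,X_pX_q,\ldots,X_\ell)$, and on the geometric side this corresponds to replacing $\mathfrak{m}_p$ by $\mathfrak{m}_p/\mathfrak{m}_q\in\mathcal{M}^{<1}$, yielding $\overline{\mathfrak{m}}'\in(\mathcal{M}^{<1})^\ell$. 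One verifies $a(\overline{\mathfrak{m}})=a'(\overline{\mathfrak{m}}')$ and $\widehat{a}(\phi(\overline{\mathfrak{m}}))=\widehat{a'}(\phi(\overline{\mathfrak{m}}'))$ directly from the substitution rule and Remark~\ref{rem: action of gen series on groups}, and checks that a well-chosen combinatorial measure on the support of $\widehat{a}$ (e.g.\ the number of incomparable vertices) has strictly decreased; the induction hypothesis applied to $a'$ and $\overline{\mathfrak{m}}'$ then completes part~(1). Part~(2) is immediate by contrapositive: if $\widehat{a}\neq 0$, the monomialization gives $\widehat{a}(\phi(\overline{\mathfrak{m}}))=\phi(\mathfrak{n}_0)\,\widehat{U}(\phi(\overline{\mathfrak{n}}))$, whose right-hand side is nonzero in $\mathbb{R}((G))$ because $\phi(\mathfrak{n}_0)\in G$ is a nonzero monomial and $\widehat{U}(\phi(\overline{\mathfrak{n}}))$ has nonzero leading coefficient $\widehat{U}(0)\neq 0$.

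The main obstacle is the inductive step itself: one must coordinate the algebraic blow-up with the geometric change of monomials so that the new tuple stays inside $(\mathcal{M}^{<1})^\ell$, and then argue termination. The geometric constraint $\mathfrak{m}_p<\mathfrak{m}_q$ required for $\mathfrak{m}_p/\mathfrak{m}_q\in\mathcal{M}^{<1}$ has to be matched with the combinatorial requirement on the exponents $\alpha_{i,p},\alpha_{j,p},\alpha_{i,q},\alpha_{j,q}$ used to reduce complexity; proving that a compatible choice always exists and that the iteration terminates (a classical resolution-of-singularities argument, here carried out in the generalized setting where exponents may be real rather than integer) is the delicate combinatorial core of the proof.
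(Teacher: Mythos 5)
Your proposal has the right shape---Monomial Division to extract a monomial factor, admissible blow-ups to coalesce the remaining pieces into a unit, induction on a measure of combinatorial complexity---but the inductive step is exactly where the difficulty lies, and you have left it as an acknowledged gap rather than closing it. Specifically: (a) the simple blow-up $X_p\mapsto X_pX_q$ you propose has no guarantee of decreasing any natural complexity measure (the ``number of incomparable vertices'' you gesture at is not a measure that a single such blow-up monotonically reduces), and (b) you have not shown that one can always choose indices $p,q$ with $\alpha_{i,p}>\alpha_{j,p}$, $\alpha_{i,q}<\alpha_{j,q}$, \emph{and} $\mathfrak{m}_p\le\mathfrak{m}_q$; in general incomparability gives you one pair of such indices but not the freedom to arrange the valuation inequality in the direction you need. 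Additionally, you do not treat the degenerate case where the $\mathfrak{m}_i$ satisfy a multiplicative relation $\mathfrak{m}_i^{\alpha_i}=\mathfrak{m}_j^{\beta_j}$, which forces a reduction in the number of variables rather than a blow-up and must be handled separately.

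The paper resolves all three points by a more carefully calibrated induction. The inductive measure is the pair $(\ell,\mathrm{b}(\mathcal{G}))$, ordered lexicographically, where $\mathrm{b}(\mathcal{G})$ is the \emph{blow-up height} of \cite[4.9,4.11]{vdd:speiss:gen}, a quantity specifically designed so that the correct transformation strictly decreases it. The transformation used is not $X_p\mapsto X_pX_q$ but the ramified singular blow-up $\nu(X)=(X_1,X_1^{\alpha_1/\beta_2}X_2,X_3,\ldots,X_\ell)$, applied after a preliminary ramification to ensure $\beta_2\mid\alpha_1$; this replaces $X^\beta$ by $X_1^{\alpha_1}X^\beta$ and reduces $\mathrm{b}(X^\alpha,X^\beta)$ by $1$. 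The direction of this blow-up is dictated not by a free choice of indices but by the total order on $\mathcal{M}$: since $\mathcal{M}$ is totally ordered, $\mathfrak{m}_1^{\alpha_1}$ and $\mathfrak{m}_2^{\beta_2}$ compare, and one blows up so that the new monomial $\mathfrak{m}=\mathfrak{m}_1^{-\alpha_1/\beta_2}\mathfrak{m}_2$ lands in $\mathcal{M}^{<1}$ automatically; this is precisely the coordination you flagged as problematic. Finally, the case $\mathfrak{m}_i^{\alpha_i}=\mathfrak{m}_j^{\beta_j}$ is handled first, by a ramified substitution $(X_1,X_2)\mapsto(Z^{\beta_2},Z^{\alpha_1})$ that drops $\ell$ by one and so also decreases the lexicographic pair. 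You correctly identified where the work is, but did not do it; the paper's proof supplies the missing machinery.
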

\begin{proof}
Let $X=\left(X_{1},\ldots,X_{\ell}\right)$. We may assume that the
$\mathfrak{m}_{i}$ are pairwise distinct.

Similar to the proof of Lemma \ref{lem: units are units-1}, if $\widehat{a}\not=0$
then there are $s\in\mathbb{N}^{*}$, a set $J\subseteq[0,+\infty)^{\ell}$
of cardinality $s$, a collection of monomials $\mathcal{G}=\left\{ X^{\alpha}:\ \alpha\in J\right\} $
which do not divide one another, and series $B_{\alpha}\in\mathbb{R}\left\llbracket X^{*}\right\rrbracket $
with nonzero constant term ($\alpha\in J$), such that
\[
\widehat{a}\left(X\right)=\sum_{\alpha\in J}X^{\alpha}B_{\alpha}\left(X\right).
\]
If $\widehat{a}=0$ we set $\mathcal{G}=\emptyset$. 

Notice that if $\widehat{a}=0$ then by quasianalyticity $a=0$ and
the lemma holds trivially.

Recall from \cite[4.9,4.11]{vdd:speiss:gen} the definition of \emph{blow-up
height} $\text{b}\left(X^{\alpha},X^{\beta}\right)$ of two monomials
and the blow-up height $\text{b}\left(\mathcal{G}\right)$ of a finite
collection of monomials. We extend this definition by setting $\text{b}\left(\mathcal{G}\right)=\left(0,0\right)$
if either $\mathcal{G}=\emptyset$ or $\ell=1$.

Notice that if $\mathcal{G}\not=\emptyset$ and $\text{b}\left(\mathcal{G}\right)=\left(0,0\right)$,
then the monomials $X^{\alpha}$ above are linearly ordered by division,
hence by factoring out the smallest monomial $X^{\alpha_{0}}$ and
by Monomial Division \cite[1.15(2)]{rolin_servi:qeqa} and Lemma \ref{lem: units are units-1},
there exists $U\in\mathcal{A}_{\ell}^{\times}$ such that
\[
\widehat{a}\left(\phi\left(\overline{\mathfrak{m}}\right)\right)=\phi\left(\overline{\mathfrak{m}}^{\alpha_{0}}\right)\widehat{U}\left(\phi\left(\overline{\mathfrak{m}}\right)\right)\not=0\mathrm{\ and\ }a\left(\overline{\mathfrak{m}}\right)=\overline{\mathfrak{m}}^{\alpha_{0}}U\left(\overline{\mathfrak{m}}\right)\not=0,
\]
so the lemma holds. In particular, this proves the lemma when $\ell=1$.

For the general case, we argue by induction on the pairs $\left(\ell,\text{b}\left(\mathcal{G}\right)\right)$,
ordered lexicographically. By the above discussion we may suppose
that $\ell>1$ and $\text{b}\left(\mathcal{G}\right)\not=\left(0,0\right)$.

If there exist $i\not=j\in\left\{ 1,\ldots,\ell\right\} $ and $\alpha_{i},\beta_{j}\in\mathbb{K}^{>0}$
such that $\mathfrak{m}_{i}^{\alpha_{i}}=\mathfrak{m}_{j}^{\beta_{j}}$,
then, up to permutation \cite[1.15(2)]{rolin_servi:qeqa}, we may
suppose $i=1,j=2$. Let $\mathfrak{m}=\mathfrak{m}_{1}^{\frac{1}{\beta_{2}}}\in\mathcal{M}$.
Then, by \cite[1.15(1), Lemma 1.19]{rolin_servi:qeqa},
\[
\widehat{h}\left(Z,X_{3},\ldots,X_{\ell}\right):=\widehat{a}\left(Z^{\beta_{2}},Z^{\alpha_{1}},X_{3},\ldots,X_{\ell}\right)\in\widehat{\mathcal{A}_{\ell-1}}
\]
and $\widehat{h}\left(\mathfrak{m},\mathfrak{m}_{3},\ldots,\mathfrak{m}_{\ell}\right)=\widehat{a}\left(\mathfrak{m}_{1},\ldots,\mathfrak{m}_{\ell}\right)$,
hence we conclude by induction.

Otherwise, there are $X^{\alpha},X^{\beta}\in\mathcal{G}$ such that
$\text{b}\left(X^{\alpha},X^{\beta}\right)$ is nonzero and minimal.
Suppose $\alpha_{1},\beta_{2}\not=0$ and $\mathfrak{m}_{1}^{\alpha_{1}}>\mathfrak{m}_{2}^{\beta_{2}}$.
If $\alpha_{1},\beta_{2}\in\mathbb{N}$ then, up to composing with
a ramification as we did in the previous case, we may suppose that
$\beta_{2}|\alpha_{1}$ (in particular, $\alpha_{1}/\beta_{2}\in\mathbb{A}$)
. Consider the admissible transformation
\[
\nu\left(X\right)=\left(X_{1},X_{1}^{\alpha_{1}/\beta_{2}}X_{2},X_{3},\ldots,X_{\ell}\right),
\]
obtained by composing a ramification and a singular blow-up transformation.
In the series $\widehat{a}\circ\nu$, the term $X^{\alpha}B_{\alpha}\left(X\right)+X^{\beta}B_{\beta}\left(X\right)$
is replaced by $X^{\alpha}B_{\alpha}\left(X\right)+X_{1}^{\alpha_{1}}X^{\beta}B_{\beta}\left(X\right)$
and
\[
\text{b}\left(X^{\alpha},X_{1}^{\alpha_{1}}X^{\beta}\right)=\text{b}\left(X^{\alpha},X^{\beta}\right)-1.
\]
Let $\mathfrak{m}=\mathfrak{m}_{1}^{-\frac{\alpha_{1}}{\beta_{2}}}\mathfrak{m}_{2}\in\mathcal{M}$.
Since $\mathfrak{m}<1$ and 
\[
\widehat{a}\circ\nu\left(\mathfrak{m}_{1},\mathfrak{m},\mathfrak{m}_{3},\ldots,\mathfrak{m}_{\ell}\right)=\widehat{a}\left(\mathfrak{m}_{1},\mathfrak{m}_{2},\mathfrak{m}_{3},\ldots,\mathfrak{m}_{\ell}\right),
\]
we conclude by induction.
\end{proof}

\subsection{Splitting\label{subsec:Splitting}}
\begin{notation}
If
\begin{equation}
\sigma=\sum_{\alpha<\gamma}c_{\alpha}g_{\alpha}\in\mathbb{R}\left(\left(G\right)\right),\label{eq:Hahn series}
\end{equation}
$*\in\left\{ <,=,>\right\} $ and $g\in G$, we set
\[
\sigma_{*g}:=\begin{cases}
{\displaystyle \sum_{\alpha\in\mathcal{S}_{*g}}c_{\alpha}g_{\alpha}} & \mathrm{if}\ \mathcal{S}_{*g}:=\left\{ \alpha<\gamma:\ g_{\alpha}*g\right\} \not=\emptyset\\
0 & \mathrm{otherwise}
\end{cases}.
\]
If $\widehat{a},\widehat{b}\in\mathbb{R}\left(\left(G\right)\right)$
we write $\widehat{a}\subseteq\widehat{b}$ if $\widehat{a}=\sum_{\alpha}a_{\alpha}g_{\alpha}$
is a subseries of $\widehat{b}=\sum_{\alpha}b_{\alpha}g_{\alpha}$,
i.e. if $\text{Supp}\left(\widehat{a}\right)\subseteq\text{Supp}\left(\widehat{b}\right)$
and for all $\alpha\in\text{Supp}\left(\widehat{a}\right),\ a_{\alpha}=b_{\alpha}$. 
\end{notation}
In general, a subseries of a series $\widehat{a}\in\widehat{\mathcal{A}}$
is not necessarily an element of $\widehat{\mathcal{A}}$. The fact
that $\widehat{\mathcal{A}}$ is truncation closed only ensures that
certain subseries (obtained by truncating variable by variable) are
still in $\widehat{\mathcal{A}}$.
\begin{lem}[Splitting]
\label{lem: splitting} Let $a\in\mathcal{A}_{\ell},\mathfrak{m}\in\mathcal{M},*\in\left\{ <,=,>\right\} $
and $\mathfrak{m}_{1},\ldots,\mathfrak{m}_{\ell}\in\mathcal{M}$ such
that $\mathfrak{m}_{i}<1$. Let $g=\phi\left(\mathfrak{m}\right)$
and $g_{i}=\phi\left(\mathfrak{m}_{i}\right)$ for $i=1,\ldots,\ell$.
Then there exists a unique $a_{*g}\in\mathcal{A}_{\ell}$ such that
$\widehat{a_{*g}}\subseteq\widehat{a}$ and
\begin{equation}
\left(\widehat{a}\left(g_{1},\ldots,g_{\ell}\right)\right)_{*g}=\widehat{a_{*g}}\left(g_{1},\ldots,g_{\ell}\right).\label{eq: truncation of series-1}
\end{equation}
\end{lem}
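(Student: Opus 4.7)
I would establish uniqueness and existence separately.

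For uniqueness, suppose $a_{*g}\in\mathcal{A}_{\ell}$ satisfies the two hypotheses. The only subseries of $\widehat{a}$ whose evaluation at $(g_1,\ldots,g_\ell)$ agrees with $(\widehat{a}(g_1,\ldots,g_\ell))_{*g}$ monomial-by-monomial in $G$ is the one with support $\{\alpha\in\text{Supp}(\widehat{a}):\prod_{i}g_i^{\alpha_i}*g\}$ and coefficients inherited from $\widehat{a}$; quasianalyticity then determines $a_{*g}$ uniquely.

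For existence I would mirror the induction scheme of Lemma~\ref{lem: monomialization-1}, inducting on the pair $(\ell,\text{b}(\mathcal{G}))$ in lexicographic order, where $\mathcal{G}$ is the finite family of pairwise nondividing monomials appearing in the presentation of $\widehat{a}$ used in the monomialization proof. The base case $\ell=1$ applies truncation closedness directly: since $G$ is torsion-free and $g_1<1$, the map $\alpha\mapsto g_1^{\alpha}$ on $\mathbb{A}$ is strictly decreasing, so $\{\alpha:g_1^{\alpha}>g\}$ is an initial segment $[0,\alpha_0)\cap\mathbb{A}$, $\{\alpha:g_1^{\alpha}=g\}$ is empty or a singleton, and $\{\alpha:g_1^{\alpha}<g\}$ is their complement. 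Definition~\ref{def: tr. closed, natural, classical-1} yields the initial truncation in $\widehat{\mathcal{A}_1}$, from which the other two cases follow by subtraction and by the fact that a single monomial in $\text{Supp}(\widehat{a})$ is trivially in $\widehat{\mathcal{A}_1}$. For the inductive step I would follow the three cases of Lemma~\ref{lem: monomialization-1}: if $\text{b}(\mathcal{G})=(0,0)$, the monomials in $\mathcal{G}$ are linearly ordered by division, so I factor out the smallest by Monomial Division and reduce to the case $\ell=1$; if there is a multiplicative relation $\mathfrak{m}_i^{\alpha_i}=\mathfrak{m}_j^{\beta_j}$, the ramification substitution $\widehat{h}(Z,X_3,\ldots,X_\ell):=\widehat{a}(Z^{\beta_2},Z^{\alpha_1},X_3,\ldots,X_\ell)$ presents $\widehat{a}$ as a series in $\ell-1$ variables in $\widehat{\mathcal{A}}$ by the admissibility properties \cite[1.15]{rolin_servi:qeqa}; otherwise the singular blow-up of Lemma~\ref{lem: monomialization-1} strictly decreases $\text{b}(\mathcal{G})$. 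In each case the transformed series lies in $\widehat{\mathcal{A}}$, its evaluation at the transformed monomial tuple equals $\widehat{a}(g_1,\ldots,g_\ell)$, and the $*g$ condition pulls back, so by induction we get the splitting of the transformed series, and the desired $a_{*g}$ is obtained by reversing the transformation.

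The main obstacle I anticipate is compatibility of the subseries relation with the reduction steps, especially the ramification. The substitution above can collapse several distinct $\alpha\in\text{Supp}(\widehat{a})$ onto a single exponent of $\text{Supp}(\widehat{h})$, so a subseries of $\widehat{h}$ does not immediately pull back to a subseries of $\widehat{a}$. The key check is that each collapsed cluster shares the same value of $\prod_i g_i^{\alpha_i}\in G$: since $\mathfrak{m}_1=\mathfrak{m}^{\beta_2}$ and $\mathfrak{m}_2=\mathfrak{m}^{\alpha_1}$ for $\mathfrak{m}=\mathfrak{m}_1^{1/\beta_2}$, and $\phi$ is $\mathbb{K}$-power-preserving, the value of $\prod_i g_i^{\alpha_i}$ depends only on the image of $\alpha$ under the ramification's exponent map. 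Hence each cluster lies entirely on one side of the threshold $g$, pulling back a subseries across the reduction is well-defined, and the induction closes cleanly.
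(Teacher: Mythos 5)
Your uniqueness argument is fine, and your observation in the last paragraph (that collapsed exponent clusters under a ramification land on the same element of $G$, so the subseries relation pulls back) is correct and a genuine point of care. However, the existence argument has a gap that prevents the induction from terminating, and the paper takes a fundamentally different route.

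The problem is your base case $\text{b}(\mathcal{G})=(0,0)$ with $\ell>1$. When the monomials in $\mathcal{G}$ do not divide one another and $\text{b}(\mathcal{G})=(0,0)$, one has $|\mathcal{G}|=1$, so $\widehat{a}=X^{\alpha_0}\,\widehat{U}(X)$ for some unit $U\in\mathcal{A}_\ell^{\times}$. For the Monomialization Lemma this is the finish line: a monomial times a unit is exactly what is wanted. But for the Splitting Lemma it is not. One still has to decompose $\widehat{a}(g_1,\ldots,g_\ell)=g^{\alpha_0}\,\widehat{U}(g_1,\ldots,g_\ell)$ at the threshold $g$, i.e.\ one must split the $\ell$-variable unit $\widehat{U}$ at the threshold $g/g^{\alpha_0}$, and when $g/g^{\alpha_0}<1$ this is a genuine $\ell$-variable splitting problem. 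Worse, the collection $\mathcal{G}_U$ attached to $\widehat{U}$ is $\{1\}$, so $\text{b}(\mathcal{G}_U)=(0,0)$ again: the induction measure $(\ell,\text{b}(\mathcal{G}))$ does not decrease, and the claimed reduction ``to the case $\ell=1$'' simply does not take place. Factoring out a monomial never reduces the number of variables of the remaining unit.

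The paper avoids this by not reusing the resolution-of-singularities induction at all. It reduces to $*$ being $<$, to $a(0)=0$, to $g<1$, and to the $g_i$ being pairwise distinct and ordered $g_1<\cdots<g_\ell<1$, then inducts only on $\ell$. The key steps are: (i) write $\widehat{a}=\sum_{r_1}\widehat{b_{r_1}}(X_2,\ldots,X_\ell)X_1^{r_1}$ and use \emph{truncation closedness in the single variable $X_1$} together with Monomial Division to get $b_{r_1}\in\mathcal{A}_{\ell-1}$; (ii) observe that because $g_1<\cdots<g_\ell<1$, any exponent $r_1$ with $g_1^{r_1}<g$ already forces the whole monomial below $g$, so the tail $\sum_{r_1>s}$ (a single-variable truncation, hence in $\widehat{\mathcal{A}_\ell}$) lies entirely below $g$; and (iii) use \emph{naturality} to conclude there are only finitely many $r_1\leq s$, and apply the inductive hypothesis in $\ell-1$ variables to each corresponding $b_{r_1}$ with rescaled threshold $g_1^{-r_1}g$. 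It is precisely the combination of single-variable truncation closedness and naturality that does the work the blow-up induction cannot: it gives a finite decomposition of the $\ell$-variable problem into an exactly computable single-variable truncation plus finitely many $(\ell-1)$-variable subproblems. (For $\mathcal{A}=\text{an}^{*}$ the statement is trivial and the paper dispatches it separately, since convergence is preserved under arbitrary subseries.) So to repair your proof you would need to abandon the $(\ell,\text{b}(\mathcal{G}))$ induction and replace the putative base case by the paper's truncation-plus-naturality argument.
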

\begin{proof}
Since the case $\mathcal{A}=\text{an}^{*}$ is trivial, we may suppose
that $\mathcal{A}$ is natural and truncation closed. We prove the
case where $*$ is $<$, the other two cases follow easily. 

Notice that if we write $\widehat{a}\left(g_{1},\ldots,g_{\ell}\right)$
as a Hahn series $\sum_{\alpha<\gamma}\widetilde{c}_{\alpha}\widetilde{g}_{\alpha}$,
then $\widetilde{g}_{\alpha}\in\langle g_{1},\ldots,g_{\ell}\rangle^{\mathbb{A}}$.
If the statement holds for $a-a\left(0\right)$ then clearly it holds
for $a$, hence we may suppose that $a\left(0\right)=0$. In particular,
by Lemma \ref{lem: units are units-1} each $\widetilde{g}_{\alpha}<1$,
so if $g\geq1$ then $a_{<g}=a$. Hence we may suppose that $g<1$. 

We prove, by induction on $\ell\geq1$, that $\forall a\in\mathcal{A}_{\ell}$
$\forall g\in\phi\left(\mathcal{M}\right)^{<1}$ and $\forall g_{1},\ldots,g_{\ell}\in\phi\left(\mathcal{M}\right)$
with $g_{1}<\ldots<g_{\ell}<1$, there is $a_{<g}$ as above such
that \eqref{eq: truncation of series-1} holds. Write
\begin{align*}
\widehat{a}\left(X\right) & =\sum_{\mathbf{r}=\left(r_{1},\ldots,r_{\ell}\right)\in[0,+\infty)^{\ell}}c_{\mathbf{r}}X^{\mathbf{r}}\\
 & =\sum_{r_{1}\in[0,+\infty)}B_{r_{1}}\left(X_{2},\ldots,X_{\ell}\right)X_{1}^{r_{1}},
\end{align*}
for some $B_{r_{1}}\in\mathbb{R}\left\llbracket X_{2}^{*},\ldots,X_{\ell}^{*}\right\rrbracket $.
Since $\mathcal{A}$ is truncation closed and by Monomial Division,
we have that there exist $b_{r_{1}}\in\mathcal{A}_{\ell-1}$ such
that $\widehat{b_{r_{1}}}=B_{r_{1}}$, for all $r_{1}$ appearing
as the first coordinate of some element of the support of $\widehat{a}$.

If $\forall r\in[0,+\infty),\ g_{1}^{r}>g$, then, thanks to the order
on the $g_{i}$, we have, for all $\left(r_{1},\ldots,r_{\ell}\right)\in[0,+\infty)^{\ell}$,
that $g_{1}^{r_{1}}\cdots g_{\ell}^{r_{\ell}}\ge g_{1}^{r_{1}+\cdots+r_{\ell}}>g$.
Hence in this case $a_{<g}=0$.

Otherwise, note that, since $g_{i}<1$, if $r\in[0,+\infty)$ is such
that $g_{1}^{r}<g$, then for all $\left(r_{2},\ldots,r_{\ell}\right)\in[0,+\infty)^{\ell-1}$,
\[
g_{1}^{r}g_{1}^{r_{2}}\cdots g_{\ell}^{r_{\ell}}<g.
\]
In particular, if $\forall r\in[0,+\infty),\ g_{1}^{r}<g$, then $a_{<g}=a$. 

Suppose that $s:=\sup\left\{ r\in[0,+\infty):\ g_{1}^{r}\geq g\right\} \in[0,+\infty)$.
Since $\mathcal{A}$ is truncation closed, there is $a_{s}\in\mathcal{A}_{\ell}$
such that
\[
\widehat{a_{s}}\left(X\right)=\sum_{r_{1}>s}\widehat{b_{r_{1}}}\left(X_{2},\ldots,X_{\ell}\right)X_{1}^{r_{1}}.
\]

If $\ell=1$ then we set $a_{<g}=a_{s}$, and we are done. Otherwise,
since $\mathcal{A}$ is natural, there are finitely many $r\leq s$
appearing as the first coordinate of some element of the support of
$\widehat{a}$. For each such $r$, consider the function 
\[
b_{r}\left(x_{2},\ldots,x_{\ell}\right)\in\mathcal{A}_{\ell-1}
\]
and apply the inductive hypothesis to $b_{r}$, replacing $g$ by
$g_{0}:=g_{1}^{-r}g$. It follows that there exists $h_{r}\in\mathcal{A}_{\ell-1}$
such that $\widehat{h_{r}}\subseteq\widehat{b_{r}}$ and 
\[
\left(\widehat{b_{r}}\left(g_{2},\ldots,g_{\ell}\right)\right)_{<g_{0}}=\widehat{h_{r}}\left(g_{2},\ldots,g_{\ell}\right).
\]
We conclude by setting
\[
a_{<g}\left(x\right)=a_{s}\left(x\right)+\sum_{r\leq s}x_{1}^{r}h_{r}\left(x_{2},\ldots,x_{\ell}\right).
\]
\end{proof}

\subsection{Embedding\label{subsec:Embedding}}
\begin{defn}
\label{def: F}Let 
\[
\mathcal{F}=\left\{ \mathfrak{m}_{0}a\left(\mathfrak{m}_{1},\ldots,\mathfrak{m}_{\ell}\right):\ \ell\in\mathbb{N},\mathfrak{m}_{0}\in\mathcal{M},\overline{\mathfrak{m}}=\left(\mathfrak{m}_{1},\ldots,\mathfrak{m}_{\ell}\right)\subseteq\mathcal{M}^{<1},\ a\in\mathcal{A}_{\ell}\right\} \subseteq\mathcal{R}.
\]
\end{defn}
\begin{lem}
\label{lem: F is a fiels}$\mathcal{F}$ is an ordered field (with
respect to the pointwise order) containing $\mathbb{R}$.
\end{lem}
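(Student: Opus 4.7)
The plan is to verify in turn: (i) $\mathbb{R}\subseteq\mathcal{F}$; (ii) $\mathcal{F}$ is closed under addition and multiplication; (iii) every nonzero element of $\mathcal{F}$ has a multiplicative inverse in $\mathcal{F}$; and (iv) the pointwise order on $\mathcal{R}$ restricts to a total order on $\mathcal{F}$.

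Containment of $\mathbb{R}$ (and of $\mathcal{M}$) is immediate: take $\ell=0$ and $a\in\mathcal{A}_{0}=\mathbb{R}$ with $\mathfrak{m}_{0}\in\mathcal{M}$ arbitrary. For products and sums, given $f_{1}=\mathfrak{m}_{0}a(\overline{\mathfrak{m}})$ and $f_{2}=\mathfrak{n}_{0}b(\overline{\mathfrak{n}})$ with $a\in\mathcal{A}_{\ell_{1}}$ and $b\in\mathcal{A}_{\ell_{2}}$, I would first view $a$ and $b$ as functions of the joint tuple $(X_{1},\ldots,X_{\ell_{1}+\ell_{2}})$, using the stability of $\mathcal{A}$ under adjoining dummy variables (a built-in property of a GQC). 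Then $f_{1}f_{2}=(\mathfrak{m}_{0}\mathfrak{n}_{0})\,(a\cdot b)(\overline{\mathfrak{m}},\overline{\mathfrak{n}})\in\mathcal{F}$, since each $\mathcal{A}_{\ell}$ is an $\mathbb{R}$-algebra and $\mathfrak{m}_{0}\mathfrak{n}_{0}\in\mathcal{M}$. For the sum, since $\mathcal{M}$ is totally ordered one may assume $\mathfrak{n}_{0}\preceq\mathfrak{m}_{0}$, so $\mathfrak{p}:=\mathfrak{n}_{0}/\mathfrak{m}_{0}\in\mathcal{M}^{<1}\cup\{1\}$; with $d(Y,X_{1},\ldots,X_{\ell_{1}+\ell_{2}}):=a+Y\cdot b\in\mathcal{A}_{1+\ell_{1}+\ell_{2}}$ one gets $f_{1}+f_{2}=\mathfrak{m}_{0}\cdot d(\mathfrak{p},\overline{\mathfrak{m}},\overline{\mathfrak{n}})\in\mathcal{F}$ when $\mathfrak{p}<1$, while the case $\mathfrak{p}=1$ is subsumed by the algebra structure on $\mathcal{A}_{\ell_{1}+\ell_{2}}$.

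The heart of the argument is (iii), and it is exactly what the Monomialization Lemma has been designed for. Given a nonzero $f=\mathfrak{m}_{0}\,a(\overline{\mathfrak{m}})\in\mathcal{F}$, part (2) of Lemma \ref{lem: monomialization-1} forces $\widehat{a}(\phi(\overline{\mathfrak{m}}))\not=0$; part (1) then supplies $\mathfrak{n}_{0}\in\mathcal{M}$, $\overline{\mathfrak{n}}\in(\mathcal{M}^{<1})^{k}$ and a unit $U\in\mathcal{A}_{k}^{\times}$ such that $a(\overline{\mathfrak{m}})=\mathfrak{n}_{0}\,U(\overline{\mathfrak{n}})$. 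Since $U$ is a unit, $U^{-1}\in\mathcal{A}_{k}$, and $(\mathfrak{m}_{0}\mathfrak{n}_{0})^{-1}\in\mathcal{M}$, whence $f^{-1}=(\mathfrak{m}_{0}\mathfrak{n}_{0})^{-1}\,U^{-1}(\overline{\mathfrak{n}})\in\mathcal{F}$.

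The same factorization delivers (iv): monomials in $\mathcal{M}$ are positive-valued, and since $\widehat{U}$ has nonzero constant term equal to $U(0)$ (by Lemma \ref{lem: units are units-1}), the continuity of $U$ at $0$ together with $\overline{\mathfrak{n}}\to 0$ at $+\infty$ forces $U(\overline{\mathfrak{n}})$ to have eventually constant sign $\operatorname{sign}(U(0))$. Hence every nonzero $f\in\mathcal{F}$ is either eventually positive or eventually negative, so the pointwise order is a total order on $\mathcal{F}$. The only step that requires real work is inverse-closure, and the Monomialization Lemma (together with the characterization of units) has already extracted the essential geometric content; the remaining verifications are bookkeeping exploiting the $\mathbb{R}$-algebra structure of the $\mathcal{A}_\ell$ and the stability of $\mathcal{A}$ under adjoining dummy variables.
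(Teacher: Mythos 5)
Your proof is correct and follows essentially the same route as the paper: you close $\mathcal{F}$ under sums and products by absorbing an extra monomial into a function of more variables (using dummy variables and the $\mathbb{R}$-algebra structure of the $\mathcal{A}_\ell$), and you invoke the Monomialization Lemma \ref{lem: monomialization-1} together with Lemma \ref{lem: units are units-1} to obtain multiplicative inverses and the eventual sign-definiteness that makes the pointwise order total.
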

\begin{proof}
It is clear that $\mathcal{F}$ is stable under multiplication and
contains $\mathbb{R}$.

If $f_{i}=\mathfrak{m}_{0,i}a_{i}\left(\overline{\mathfrak{m}_{i}}\right)$
(for $i=1,2$) then suppose wlog $\mathfrak{m}_{0,1}\geq\mathfrak{m}_{0,2}$
and write
\[
f_{1}+f_{2}=\mathfrak{m}_{0,1}\left(a_{1}\left(\overline{\mathfrak{m}_{1}}\right)+\mathfrak{m}_{0,1}^{-1}\mathfrak{m}_{0,2}a_{2}\left(\overline{\mathfrak{m}}_{2}\right)\right),
\]
where the part in brackets is either of the form $a\left(\overline{\mathfrak{m}_{1}},\mathfrak{m}_{0,1}^{-1}\mathfrak{m}_{0,2},\overline{\mathfrak{m}}_{2}\right)$,
if $\mathfrak{m}_{0,1}>\mathfrak{m}_{0,2}$, or of the form $a\left(\overline{\mathfrak{m}_{1}},\overline{\mathfrak{m}}_{2}\right)$,
if $\mathfrak{m}_{0,1}=\mathfrak{m}_{0,2}$, for some $a\in\mathcal{A}$.
Hence $\mathcal{F}$ is stable under addition.

If $f\in\mathcal{F}\setminus\left\{ 0\right\} $ then by Lemma \ref{lem: monomialization-1}
$f$ can be rewritten in monomial form: $\mathfrak{m}_{0}U\left(\overline{\mathfrak{m}}\right)$,
with $U\in\mathcal{A}_{\ell}^{\times}$. In particular, $1/f=\mathfrak{m}_{0}^{-1}U^{-1}\left(\overline{\mathfrak{m}}\right)$.
Hence $\mathcal{F}$ is stable under multiplicative inverse. 

Lemma \ref{lem: monomialization-1} also implies that $f$ has the
same (nonzero) sign as $U\left(0\right)$, hence the field $\mathcal{F}$
is ordered.
\end{proof}
As a consequence of the Monomialization Lemma \ref{lem: monomialization-1}
we obtain the following.
\begin{cor}
\label{cor: monom of F}Let $f\in\mathcal{F}^{\times}$. There are
$k\in\mathbb{N},\mathfrak{n}_{0}\in\mathcal{M},\overline{\mathfrak{n}}=\left(\mathfrak{n}_{1},\ldots,\mathfrak{n}_{k}\right)\subseteq\mathcal{M}^{<1}$
and $U\in\mathcal{A}_{k}^{\times}$ such that $f=\mathfrak{n}_{0}U\left(\overline{\mathfrak{n}}\right)$.
\end{cor}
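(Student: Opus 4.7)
The plan is to deduce this corollary essentially immediately from the Monomialization Lemma \ref{lem: monomialization-1}, combined with the fact that $\mathcal{M}$ is a multiplicative group.

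First, I would unfold the definition of $\mathcal{F}$: since $f \in \mathcal{F}^{\times} \subseteq \mathcal{F}$, we may write $f = \mathfrak{m}_0 \, a(\mathfrak{m}_1,\ldots,\mathfrak{m}_\ell)$ for some $\ell \in \mathbb{N}$, $\mathfrak{m}_0 \in \mathcal{M}$, $\overline{\mathfrak{m}} = (\mathfrak{m}_1,\ldots,\mathfrak{m}_\ell) \subseteq \mathcal{M}^{<1}$, and $a \in \mathcal{A}_\ell$. Since $f \neq 0$ and $\mathfrak{m}_0 \neq 0$ (monomials take values in $(0,+\infty)$), necessarily $a(\overline{\mathfrak{m}}) \neq 0$. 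By the contrapositive of part (2) of Lemma \ref{lem: monomialization-1}, this forces $\widehat{a}(\phi(\overline{\mathfrak{m}})) \neq 0$, so we are in the situation of part (1).

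Applying part (1) of Lemma \ref{lem: monomialization-1} then yields $k \in \mathbb{N}$, a monomial $\mathfrak{n}'_0 \in \mathcal{M}$, a tuple $\overline{\mathfrak{n}} = (\mathfrak{n}_1,\ldots,\mathfrak{n}_k) \subseteq \mathcal{M}^{<1}$, and a unit $U \in \mathcal{A}_k^{\times}$ such that
\[
a(\overline{\mathfrak{m}}) = \mathfrak{n}'_0 \, U(\overline{\mathfrak{n}}).
\]
Substituting into the expression for $f$, we obtain $f = (\mathfrak{m}_0 \mathfrak{n}'_0) \, U(\overline{\mathfrak{n}})$. Setting $\mathfrak{n}_0 := \mathfrak{m}_0 \mathfrak{n}'_0 \in \mathcal{M}$ (using that $\mathcal{M}$ is a multiplicative group, per Definition \ref{def: monmial germs}) gives the desired representation $f = \mathfrak{n}_0 \, U(\overline{\mathfrak{n}})$.

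There is no real obstacle here: the entire content of the corollary is that the monomialization procedure carried out at the level of a single $a \in \mathcal{A}_\ell$ in Lemma \ref{lem: monomialization-1} extends painlessly to the product $\mathfrak{m}_0 \, a(\overline{\mathfrak{m}})$ defining a general element of $\mathcal{F}^{\times}$. The only subtlety worth flagging is the passage from $a(\overline{\mathfrak{m}}) \neq 0$ to $\widehat{a}(\phi(\overline{\mathfrak{m}})) \neq 0$, which is precisely the content of part (2) of the Monomialization Lemma and is what ensures we may apply part (1) to obtain the unit $U$.
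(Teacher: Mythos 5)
Your proof is correct and follows essentially the same route as the paper's: unfold the definition of $\mathcal{F}$, observe that $a(\overline{\mathfrak{m}})\neq 0$, apply the Monomialization Lemma to $a(\overline{\mathfrak{m}})$, and absorb $\mathfrak{m}_0$ into the leading monomial using that $\mathcal{M}$ is a group. The only difference is that you explicitly invoke the contrapositive of part (2) of Lemma \ref{lem: monomialization-1} to justify that part (1) applies, a step the paper leaves implicit.
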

\begin{proof}
Write $f$ as $\mathfrak{m}_{0}a\left(\overline{\mathfrak{m}}\right)$,
for some $a\in\mathcal{A}$. Since $f\not=0$, we have that $a\left(\overline{\mathfrak{m}}\right)\not=0$,
so by Lemma \ref{lem: monomialization-1} there are $k\in\mathbb{N},\widetilde{\mathfrak{n}_{0}}\in\mathcal{M},\overline{\mathfrak{n}}=\left(\mathfrak{n}_{1},\ldots,\mathfrak{n}_{k}\right)\subseteq\mathcal{M}^{<1}$
and $U\in\mathcal{A}_{k}^{\times}$ such that $a\left(\overline{\mathfrak{m}}\right)=\widetilde{\mathfrak{n}_{0}}U\left(\overline{\mathfrak{n}}\right)$.
Hence the corollary holds, with $\mathfrak{n}_{0}=\widetilde{\mathfrak{n}_{0}}\mathfrak{m}_{0}$.
\end{proof}
\begin{lem}[Composition]
\label{lem: composition}Let $f_{1},\ldots,f_{\ell}\in\mathcal{F}^{\prec1}$
and $a\in\mathcal{A}_{\ell}$. Then $a\circ\left(f_{1},\ldots,f_{\ell}\right)\in\mathcal{F}$.
\end{lem}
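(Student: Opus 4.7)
The plan is to monomialize each $f_i$ via Corollary~\ref{cor: monom of F} and then realize $a\circ(f_1,\ldots,f_\ell)$ as a single function in $\mathcal{A}$ evaluated at a tuple of small monomials, putting it in the form required by Definition~\ref{def: F}.

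First, I would dispose of the trivial case: if some $f_i$ vanishes, the composition reduces to a function in $\mathcal{A}$ of fewer variables via coordinate vanishing (cf.\ \cite[1.15(4)]{rolin_servi:qeqa}), so one may assume $f_i\in\mathcal{F}^{\times}$ for every $i$. Applying Corollary~\ref{cor: monom of F}, write
\[
f_i=\mathfrak{n}_{0,i}\,U_i(\overline{\mathfrak{n}}_i),\qquad \mathfrak{n}_{0,i}\in\mathcal{M},\ \overline{\mathfrak{n}}_i\subseteq\mathcal{M}^{<1},\ U_i\in\mathcal{A}_{k_i}^{\times}.
\]
Since $f_i\prec 1$ and $U_i(\overline{\mathfrak{n}}_i)\to U_i(0)\in\mathbb{R}^{\times}$, one immediately deduces $\mathfrak{n}_{0,i}\in\mathcal{M}^{<1}$. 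Moreover, for $a(f_1,\ldots,f_\ell)$ to make sense as a germ (given the generalized-variable convention of Remark~\ref{rem: gen variables}), each $f_i$ must be nonnegative eventually, which forces $U_i(0)>0$.

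Next I would construct a single function in $\mathcal{A}$ that encodes the whole composition. Introduce fresh variables $y_1,\ldots,y_\ell$ and $\overline{z}_1,\ldots,\overline{z}_\ell$ (with $\overline{z}_i$ of length $k_i$) and set $g_i(y_i,\overline{z}_i):=y_i\,U_i(\overline{z}_i)$. As a product of a coordinate with an element of $\mathcal{A}$, each $g_i$ lies in $\mathcal{A}$ and vanishes at the origin. Invoking the restricted composition axiom of $\mathcal{A}$ (cf.\ \cite[1.15]{rolin_servi:qeqa}), one obtains
\[
\tilde{a}(\overline{y},\overline{z}_1,\ldots,\overline{z}_\ell):=a\bigl(g_1(y_1,\overline{z}_1),\ldots,g_\ell(y_\ell,\overline{z}_\ell)\bigr)\in\mathcal{A}.
\]
Substituting the monomials back in then gives $a(f_1,\ldots,f_\ell)=\tilde{a}(\mathfrak{n}_{0,1},\ldots,\mathfrak{n}_{0,\ell},\overline{\mathfrak{n}}_1,\ldots,\overline{\mathfrak{n}}_\ell)$, which, with $\mathfrak{m}_0=1\in\mathcal{M}$ and all substituted monomials in $\mathcal{M}^{<1}$, matches the template of Definition~\ref{def: F}.

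The step I expect to be the main obstacle is the composition step: one must confirm that the particular inputs $g_i(y_i,\overline{z}_i)=y_i U_i(\overline{z}_i)$ fall within the scope of the restricted composition axiom of $\mathcal{A}$. This should follow from the standard GQC closure properties---each $g_i$ lies in $\mathcal{A}$, vanishes at $0$, and has image in $[0,r_i)$ for small inputs thanks to $U_i(0)>0$---but it requires checking the precise formulation of the composition axiom in \cite[1.15]{rolin_servi:qeqa} and verifying that our $g_i$ matches its hypotheses.
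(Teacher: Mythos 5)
Your proof is correct and follows essentially the same route as the paper: dispose of the degenerate cases, apply Corollary~\ref{cor: monom of F} to each $f_i$, observe that $f_i\prec1$ forces the leading monomial $\mathfrak{n}_{0,i}$ to lie in $\mathcal{M}^{<1}$, and then use the restricted composition closure of $\mathcal{A}$ (the paper invokes \cite[Lemma~1.19]{rolin_servi:qeqa} at exactly the point where you invoke \cite[1.15]{rolin_servi:qeqa}) to produce a single $b\in\mathcal{A}$ with $a\circ(f_1,\ldots,f_\ell)=b(\mathfrak{n}_{0,1},\overline{\mathfrak{n}}_1,\ldots,\mathfrak{n}_{0,\ell},\overline{\mathfrak{n}}_\ell)$. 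The step you flagged as a possible obstacle is precisely what Lemma~1.19 of \cite{rolin_servi:qeqa} is for, and your additional observations (that $\mathfrak{n}_{0,i}\prec1$ and that $U_i(0)>0$) are the small bookkeeping facts the paper leaves implicit when it writes $\mathfrak{n}_i,\overline{\mathfrak{m}_i}\subseteq\mathcal{M}^{<1}$ and defers to the cited lemma.
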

\begin{proof}
We may suppose that $f_{i}\not=0$ $\left(i=1,\ldots,\ell\right)$
by \cite[1.8(7)]{rolin_servi:qeqa}, and that $a\circ\left(f_{1},\ldots,f_{\ell}\right)\not=0$.
By Corollary \ref{cor: monom of F}, there are $\mathfrak{n}_{i},\overline{\mathfrak{m}_{i}}\subseteq\mathcal{M}^{<1}$
and units $U_{i}\in\mathcal{A}^{\times}$ $\left(i=1,\ldots,\ell\right)$
such that $f_{i}=\mathfrak{n}_{i}U_{i}\left(\overline{\mathfrak{m}_{i}}\right)$.
Then, arguing as in \cite[Lemma 1.19]{rolin_servi:qeqa}, we can find
$b\in\mathcal{A}$ such that $a\circ\left(f_{1},\ldots,f_{\ell}\right)=b\left(\mathfrak{n}_{1},\overline{\mathfrak{m}_{1}},\ldots,\mathfrak{n_{\ell}},\overline{\mathfrak{m}_{\ell}}\right)$.
\end{proof}
Recall that a real germ $f\in\mathcal{R}$ is differentiable if it
admits a differentiable representative. The hypotheses of the following
proposition are satisfied for example if $\mathcal{F}$ is a differential
subfield of a Hardy field.
\begin{prop}[Derivation]
\label{prop: derivation}Suppose that $\mathcal{M}$ is a group of
differentiable germs such that 
\[
\mathcal{M}':=\left\{ \mathfrak{m}':\ \mathfrak{m}\in\mathcal{M}\right\} \subseteq\mathcal{F}\mathrm{\ and\ }\left(\mathcal{M}^{<1}\right)'\subseteq\mathcal{F}^{\prec1}.
\]
Then every germ $f$ in $\mathcal{F}$ is differentiable and $f'\in\mathcal{F}$.
\end{prop}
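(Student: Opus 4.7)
The plan is to exploit the concrete description of elements of $\mathcal{F}$ from Definition \ref{def: F} and then compute the derivative via the chain rule. Fix $f\in\mathcal{F}$ and write $f=\mathfrak{m}_{0}\cdot a(\mathfrak{m}_{1},\ldots,\mathfrak{m}_{\ell})$ with $\mathfrak{m}_{0}\in\mathcal{M}$, $\mathfrak{m}_{i}\in\mathcal{M}^{<1}$, and $a\in\mathcal{A}_{\ell}$. By the hypothesis on $\mathcal{M}$, each $\mathfrak{m}_i$ is differentiable as a germ at $+\infty$. Since $\mathfrak{m}_i$ takes values in $(0,+\infty)$ (as a monomial) and $\mathfrak{m}_i\prec 1$, eventually $\mathfrak{m}_{i}(x)$ lies in the open interval $(0,r_{i})$, i.e.\ in the interior of the $i$-th coordinate of the domain of any representative of $a$. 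Because the functions in $\mathcal{A}_{\ell}$ are $C^{1}$ on the interior of their domain, the chain rule yields differentiability of $a(\mathfrak{m}_{1},\ldots,\mathfrak{m}_{\ell})$, and then differentiability of $f$ as a product.

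Next, the chain rule gives
\[
f'=\mathfrak{m}_{0}'\cdot a(\overline{\mathfrak{m}})\ +\ \mathfrak{m}_{0}\cdot\sum_{i=1}^{\ell}\d{a}{x_{i}}(\overline{\mathfrak{m}})\cdot\mathfrak{m}_{i}'.
\]
I will argue term by term that the right-hand side belongs to $\mathcal{F}$. By hypothesis $\mathfrak{m}_{0}'\in\mathcal{F}$ and $\mathfrak{m}_{i}'\in\mathcal{F}^{\prec 1}\subseteq\mathcal{F}$, while $a(\overline{\mathfrak{m}})\in\mathcal{F}$ directly from Definition \ref{def: F}. The key ingredient is that $\partial a/\partial x_{i}\in\mathcal{A}_{\ell}$, which is part of the closure properties of a GQC listed in \cite[Proviso 1.20]{rolin_servi:qeqa}; granting this, $(\partial a/\partial x_{i})(\overline{\mathfrak{m}})\in\mathcal{F}$ again by Definition \ref{def: F}. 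Each summand is therefore a product of elements of $\mathcal{F}$, and since $\mathcal{F}$ is a field by Lemma \ref{lem: F is a fiels}, we conclude $f'\in\mathcal{F}$.

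The only step that is not immediate from the definitions is the closure of $\mathcal{A}_{\ell}$ under partial derivatives, and this is the sole point where I would need to appeal to the defining axioms of a GQC rather than to the structural lemmas already proved in the present section. If for some reason that closure were not explicitly listed among the GQC axioms, one could recover it formally from monomial division and the quasianalyticity morphism, since termwise differentiation of $\widehat{a}$ is well-defined on $\mathbb{R}\llbracket X^{*}\rrbracket$ and compatible with $\widehat{\ }$; but I expect that no such workaround is needed. No use of the finer bound $\mathfrak{m}_{i}'\prec 1$ is required for this statement — it is available for downstream purposes, e.g.\ to feed the $\mathfrak{m}_i'$ back into Lemma \ref{lem: composition}.
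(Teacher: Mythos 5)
There is a genuine gap at the step you yourself flag as the ``only step that is not immediate.'' The closure property of a GQC is \emph{not} that $\partial a/\partial x_{i}\in\mathcal{A}_{\ell}$; what holds (see \cite[Remark 1.17]{rolin_servi:qeqa}, cited by the paper at precisely this point) is that the \emph{logarithmic} partial derivative $\partial_{i}a$, defined as the continuous extension to the boundary of $x_{i}\,\partial a/\partial x_{i}$, lies in $\mathcal{A}_{\ell}$. The distinction is forced on you: elements of $\mathcal{A}_{\ell}$ are only $C^{1}$ on the \emph{interior} of $[0,r_{1})\times\cdots$, and $\partial a/\partial x_{i}$ generically blows up along $\{x_{i}=0\}$ when $\widehat{a}$ has exponents in $(0,1)$ in $X_{i}$. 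The ``workaround'' you sketch also fails for the same reason: termwise $\partial/\partial X_{i}$ is \emph{not} a well-defined operator on $\mathbb{R}\left\llbracket X^{*}\right\rrbracket$, since e.g.\ $\partial_{X}\bigl(X^{1/2}\bigr)=\tfrac{1}{2}X^{-1/2}$ has a negative exponent. So the sentence ``$(\partial a/\partial x_{i})(\overline{\mathfrak{m}})\in\mathcal{F}$ again by Definition \ref{def: F}'' does not go through as written.

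The fix is small and is exactly what the paper does: write $\frac{\partial a}{\partial x_{i}}(\overline{\mathfrak{m}})=\mathfrak{m}_{i}^{-1}\,\partial_{i}a(\overline{\mathfrak{m}})$, where now $\partial_{i}a\in\mathcal{A}_{\ell}$, so $\partial_{i}a(\overline{\mathfrak{m}})\in\mathcal{F}$ by Definition \ref{def: F}, and $\mathfrak{m}_{i}^{-1}\in\mathcal{M}\subseteq\mathcal{F}$; then each summand in your displayed formula for $f'$ is a product of elements of $\mathcal{F}$ and you finish with Lemma \ref{lem: F is a fiels}. (The paper phrases this final step via the Composition Lemma, packaging each summand as $b_{i}(\overline{\mathfrak{m}},\mathfrak{m}_{i}')$ and invoking $\mathfrak{m}_{i}'\prec1$; your field-structure argument also works, and with the correction above it is arguably cleaner. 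Your side remark that $\mathfrak{m}_{i}'\prec1$ is not needed is then defensible for this particular proposition, though you should be explicit that you are making a minor economy relative to the stated hypotheses.) Your differentiability argument in the first paragraph is fine.
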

\begin{proof}
Let $f=\mathfrak{m}_{0}a\left(\overline{\mathfrak{m}}\right)$ for
some $a\in\mathcal{A}_{\ell}$ and recall from \cite[Remark 1.17]{rolin_servi:qeqa}
that, for $i=1,\ldots,\ell$, $\partial_{i}a\in\mathcal{A}_{\ell}$,
where $\partial_{i}a$ is the germ at zero of $x_{i}\frac{\partial a}{\partial x_{i}}$
(extended by continuity at zero). Then
\[
f'=\mathfrak{m}_{0}'a\left(\overline{\mathfrak{m}}\right)+\mathfrak{m}_{0}\prod_{i=1}^{\ell}\mathfrak{m}_{i}^{-1}\sum_{i=1}^{\ell}\left(\prod_{j\not=i}\mathfrak{m}_{j}\right)\mathfrak{m}_{i}'\partial_{i}a\left(\overline{\mathfrak{m}}\right).
\]
Notice that, by the closure properties of $\mathcal{A}$ listed in
\cite{rolin_servi:qeqa}, there is $b_{i}\in\mathcal{A}$ such that
\[
b_{i}\left(\overline{\mathfrak{m}},\mathfrak{m}_{i}'\right)=\left(\prod_{j\not=i}\mathfrak{m}_{j}\right)\mathfrak{m}_{i}'\partial_{i}a\left(\overline{\mathfrak{m}}\right).
\]
Since $\mathfrak{m}_{i}'\prec1$, the above germ is in $\mathcal{F}$.
\end{proof}
The main result of this section is the following. Recall that $\mathcal{A},G,\mathcal{M}$
and $\phi$ are as in Proviso \ref{proviso}.
\begin{thm}[Embedding]
\label{thm: embedding} The ordered group embedding $\phi:\mathcal{M}\longrightarrow G$
can be extended to a truncation closed ordered field embedding
\[
\phi:\mathcal{F}\longrightarrow\mathbb{R}\left(\left(G\right)\right),
\]
which provides a transasymptotic expansion for the germs in $\mathcal{F}$,
by setting
\begin{equation}
\phi\left(\mathfrak{m}_{0}a\left(\mathfrak{m}_{1},\ldots,\mathfrak{m}_{\ell}\right)\right):=\phi\left(\mathfrak{m}_{0}\right)\widehat{a}\left(\phi\left(\mathfrak{m}_{1}\right),\ldots,\phi\left(\mathfrak{m}_{\ell}\right)\right).\label{eq: phi embedding}
\end{equation}
\end{thm}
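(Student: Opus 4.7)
The plan is to verify, in order, the following four properties of the map $\phi$ defined by \eqref{eq: phi embedding}: (i) well-definedness, (ii) field homomorphism and order-preservation, (iii) truncation closedness, and (iv) the transasymptotic expansion property. The two workhorses will be the Monomialization Lemma \ref{lem: monomialization-1} and the Splitting Lemma \ref{lem: splitting}; Lemma \ref{lem: units are units-1} will be used repeatedly to identify leading coefficients with values at $0$.

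For \emph{well-definedness}, suppose $\mathfrak{m}_{0,1}a_1(\overline{\mathfrak{m}}_1)=\mathfrak{m}_{0,2}a_2(\overline{\mathfrak{m}}_2)$ are two presentations of the same germ $f\in\mathcal{F}$. Arguing as in the proof of Lemma \ref{lem: F is a fiels}, the difference can be rewritten as a single expression $\mathfrak{m}'a(\overline{\mathfrak{m}}')$ with $a\in\mathcal{A}$, and since this difference is the zero germ, Lemma \ref{lem: monomialization-1}(1) (in contrapositive) forces $\widehat{a}(\phi(\overline{\mathfrak{m}}'))=0$. Translating back, this yields exactly the equality of the two candidate images. \emph{Injectivity} follows by the same mechanism: if $\phi(f)=0$ then $\widehat{a}(\phi(\overline{\mathfrak{m}}))=0$ (since $\phi(\mathfrak{m}_{0})$ is a unit in the Hahn field), whence $f=0$ by Lemma \ref{lem: monomialization-1}(2).

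For the \emph{algebraic structure}, multiplicativity is immediate from the definition and the fact that $\widehat{\cdot}$ is an $\mathbb{R}$-algebra morphism. Additivity is checked by running the addition computation from Lemma \ref{lem: F is a fiels}: if $\mathfrak{m}_{0,1}\geq\mathfrak{m}_{0,2}$, one writes $f_1+f_2=\mathfrak{m}_{0,1}a(\overline{\mathfrak{m}}_1,\mathfrak{m}_{0,1}^{-1}\mathfrak{m}_{0,2},\overline{\mathfrak{m}}_2)$ with $a$ built from $a_1,a_2$ via the closure properties of $\mathcal{A}$, and then compares with the analogous identity inside $\mathbb{R}((G))$, which holds because $\widehat{\cdot}$ respects composition. \emph{Order-preservation} is the cleanest point: given $f\in\mathcal{F}^{>0}$, Corollary \ref{cor: monom of F} rewrites $f=\mathfrak{n}_{0}U(\overline{\mathfrak{n}})$ with $U\in\mathcal{A}_{k}^{\times}$. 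By Lemma \ref{lem: units are units-1}, $\mathrm{lt}(\widehat{U}(\phi(\overline{\mathfrak{n}})))=U(0)$, so $\mathrm{lt}(\phi(f))=U(0)\phi(\mathfrak{n}_{0})$. The sign of $f$ is $\mathrm{sgn}(U(0))$ (since $U(\overline{\mathfrak{n}})\to U(0)$ as $y\to+\infty$), which equals the sign of $\mathrm{lc}(\phi(f))$, i.e.\ the sign of $\phi(f)$ in $\mathbb{R}((G))$.

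For \emph{truncation closedness}, fix $f=\mathfrak{m}_{0}a(\overline{\mathfrak{m}})\in\mathcal{F}$, write $\phi(f)=\sum_{\alpha<\gamma}c_{\alpha}g_{\alpha}$ and fix $\beta<\gamma$. Apply Lemma \ref{lem: splitting} to $a$ with the cut-off monomial $\widetilde{g}:=g_{\beta}/\phi(\mathfrak{m}_{0})\in G$ and the $*$ being $>$, to produce $a_{>\widetilde{g}}\in\mathcal{A}_{\ell}$ with $\widehat{a_{>\widetilde{g}}}(\phi(\overline{\mathfrak{m}}))=(\widehat{a}(\phi(\overline{\mathfrak{m}})))_{>\widetilde{g}}$. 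Then $f_{\beta}:=\mathfrak{m}_{0}a_{>\widetilde{g}}(\overline{\mathfrak{m}})\in\mathcal{F}$ and, by direct computation, $\phi(f_{\beta})=\sum_{\alpha<\beta}c_{\alpha}g_{\alpha}$. Finally, for the \emph{transasymptotic expansion} property, the difference $f-f_{\beta}\in\mathcal{F}$ is either zero or, by Corollary \ref{cor: monom of F}, has a monomial presentation $\mathfrak{n}_{0}U(\overline{\mathfrak{n}})$ with $U$ a unit; comparing leading terms of $\phi(f-f_{\beta})=\sum_{\alpha\geq\beta}c_{\alpha}g_{\alpha}$ with $U(0)\phi(\mathfrak{n}_{0})$ (via Lemma \ref{lem: units are units-1}) forces $\phi(\mathfrak{n}_{0})=g_{\beta}$ and $U(0)=c_{\beta}$, whence $f-f_{\beta}=\phi^{-1}(g_{\beta})U(\overline{\mathfrak{n}})\sim_{+\infty}c_{\beta}\phi^{-1}(g_{\beta})$ since $\overline{\mathfrak{n}}\to 0$ and $U$ is continuous at the origin.

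I expect the main obstacle to be \emph{well-definedness combined with the additivity check}: one has to argue that the various ad hoc presentations $\mathfrak{m}_{0}a(\overline{\mathfrak{m}})$ -- produced by different applications of the closure operations on $\mathcal{A}$ -- are transported consistently by $\widehat{\cdot}$ into the Hahn field. The cleanest way to avoid a combinatorial headache is to reduce every equality check to an equality of the form ``germ $=0$'' and then invoke Lemma \ref{lem: monomialization-1}(2) as a single uniform criterion, after which additivity, injectivity and well-definedness all collapse to one and the same argument.
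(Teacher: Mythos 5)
Your proposal matches the paper's own proof in every essential step: reducing well-definedness to the implication $f=0\Rightarrow\phi(f)=0$ via the Monomialization Lemma, deriving order-preservation from Corollary \ref{cor: monom of F} and Lemma \ref{lem: units are units-1} (comparing the constant term of the monomializing unit to the leading coefficient in $\mathbb{R}((G))$), obtaining truncation closedness from the Splitting Lemma applied with cut-off monomial $g_\beta\phi(\mathfrak{m}_0)^{-1}$, and establishing the transasymptotic property by monomializing the tail $f-f_\beta$ and matching its leading data with $c_\beta g_\beta$. The only cosmetic difference is that you give injectivity an explicit treatment via Lemma \ref{lem: monomialization-1}(2), which the paper omits since a nonzero field homomorphism into a field is automatically injective.
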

\begin{proof}
First, we show that $\phi$ is a well defined field embedding: arguing
as in the proof of Lemma \ref{lem: F is a fiels}, it is enough to
show that, for $f\in\mathcal{F}$, if $f=0$ then $\phi\left(f\right)=0$.
This latter statement follows from the Monomialization Lemma \ref{lem: monomialization-1}(1). 

Lemma \ref{lem: units are units-1} and Corollary \ref{cor: monom of F}
imply that $f$ and $\phi\left(f\right)$ have the same sign (which
is the sign of the constant term of the unit in the monomialized form).
Hence $\phi$ is an ordered field embedding.

To show that $\phi$ is truncation closed, let $f=\mathfrak{m}_{0}a\left(\mathfrak{m}_{1},\ldots,\mathfrak{m}_{\ell}\right)\in\mathcal{F}\setminus\left\{ 0\right\} $
and let $\sigma=\phi\left(f\right)=\phi\left(\mathfrak{m}_{0}\right)\widehat{a}\left(\phi\left(\mathfrak{m}_{1}\right),\ldots,\phi\left(\mathfrak{m}_{\ell}\right)\right)=\sum_{\alpha<\gamma}c_{\alpha}g_{\alpha}$.
Let $\beta<\gamma$. Define $g:=\left(\phi\left(\mathfrak{m}_{0}\right)\right)^{-1}g_{\beta}$.
By the Splitting Lemma \ref{lem: splitting}, there exists a unique
germ $a_{>g}\in\mathcal{A}_{\ell}$ such that $\widehat{a_{>g}}\subseteq\widehat{a}$
and 
\[
\left(\widehat{a}\right)_{>g}\left(\phi\left(\mathfrak{m}_{1}\right),\ldots,\phi\left(\mathfrak{m}_{\ell}\right)\right)=\widehat{a_{>g}}\left(\phi\left(\mathfrak{m}_{1}\right),\ldots,\phi\left(\mathfrak{m}_{\ell}\right)\right).
\]
Then, defining $f_{\beta}=\mathfrak{m}_{0}a_{>g}\left(\mathfrak{m}_{1},\ldots,\mathfrak{m}_{\ell}\right)\in\mathcal{F}$,
we have that $\phi\left(f_{\beta}\right)=\sum_{\alpha<\beta}c_{\alpha}g_{\alpha}$.

It remains to prove that $\phi$ provides a transasymptotic expansion.
Let $f\in\mathcal{F}\setminus\left\{ 0\right\} ,\ \sigma=\phi\left(f\right)=\sum_{\alpha<\gamma}c_{\alpha}g_{\alpha}$
and $\beta<\gamma$. Let $f_{\beta}$ be, as above, the unique germ
such that $\phi\left(f_{\beta}\right)=\sum_{\alpha<\beta}c_{\alpha}g_{\alpha}$
and consider $h_{\beta}:=f-f_{\beta}$. We have that $\phi\left(h_{\beta}\right)=\sum_{\alpha\geq\beta}c_{\alpha}g_{\alpha}$,
hence $\text{lt}\left(\phi\left(h_{\beta}\right)\right)=c_{\beta}g_{\beta}$.
We aim to prove that $h_{\beta}\sim_{+\infty}c_{\beta}\phi^{-1}\left(g_{\beta}\right)$.
To see this, apply the Monomialization Lemma \ref{lem: monomialization-1}
to $h_{\beta}$ and $\phi\left(h_{\beta}\right)$: there are monomials
$\mathfrak{m}_{\beta}\in\mathcal{M},\overline{\mathfrak{n}_{\beta}}\in\left(\mathcal{M}^{<1}\right)^{\ell}$
and a unit $U_{\beta}\in\mathcal{A}_{\ell}^{\times}$ such that $h_{\beta}=\mathfrak{m}_{\beta}U_{\beta}\left(\overline{\mathfrak{n}_{\beta}}\right)$
(hence $h_{\beta}\sim_{+\infty}U_{\beta}\left(0\right)\mathfrak{m}_{\beta}$)
and $\phi\left(h_{\beta}\right)=\phi\left(\mathfrak{m}_{\beta}\right)\widehat{U}_{\beta}\left(\phi\left(\overline{\mathfrak{n}_{\beta}}\right)\right)$
(hence $\text{lt}\left(\phi\left(h_{\beta}\right)\right)=\widehat{U}_{\beta}\left(0\right)\phi\left(\mathfrak{m}_{\beta}\right)$).
By Lemma \ref{lem: units are units-1}, it follows that $U_{\beta}\left(0\right)=c_{\beta}$
and $\mathfrak{m}_{\beta}=\phi^{-1}\left(g_{\beta}\right)$.
\end{proof}
\begin{defn}
\label{def: F>M}Let $\mathcal{M}_{0}$ be a subset of $\mathcal{M}$
and $*\in\left\{ \prec,\succ,\preceq,\succeq\right\} $. Define
\[
\mathcal{F}^{*\mathcal{M}_{0}}=\left\{ f\in\mathcal{F}:\ \mathfrak{m}*\mathfrak{m}_{0},\ \forall\mathfrak{m}\in\mathcal{M}\ \text{s.t.}\ \phi\left(\mathfrak{m}\right)\in\text{Supp}\left(\phi\left(f\right)\right),\forall\mathfrak{m}_{0}\in\text{\ensuremath{\mathcal{M}_{0}}}\right\} .
\]
If $\mathcal{M}_{0}=\left\{ \mathfrak{m}_{0}\right\} $ we write $\mathcal{F}^{*\mathfrak{m}_{0}}$
instead of $\mathcal{F}^{*\left\{ \mathfrak{m}_{0}\right\} }$.
\end{defn}
\begin{rem}
\label{rem: F<1}By Corollary \ref{cor: monom of F} and Theorem \ref{thm: embedding},
for $f\in\mathcal{F}$ and $*\in\left\{ \prec,\succ,\preceq,\succeq\right\} $
we have that $f*1\Longleftrightarrow\text{lm}\left(\phi\left(f\right)\right)*1$.
In particular,
\[
\mathcal{F}^{\prec1}=\left\{ f\in\mathcal{F}:\ f\prec1\right\} .
\]
\end{rem}
As a consequence of the Splitting Lemma \ref{lem: splitting} we obtain
the following.
\begin{cor}
\label{cor: splitting f F}For all $f\in\mathcal{F}$ there exist
unique $f_{\succ}\in\mathcal{F}^{\succ1}$ and $f_{\preceq}\in\mathcal{F}^{\preceq1}$
such that
\[
f=f_{\succ}+f_{\preceq}.
\]
\end{cor}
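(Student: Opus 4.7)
The plan is to reduce this to a direct application of the Splitting Lemma \ref{lem: splitting}. If $f = 0$ the decomposition is trivial, so assume $f \neq 0$ and write $f = \mathfrak{m}_0 \, a(\mathfrak{m}_1, \ldots, \mathfrak{m}_\ell)$ with $a \in \mathcal{A}_\ell$, $\mathfrak{m}_0 \in \mathcal{M}$ and $\mathfrak{m}_i \in \mathcal{M}^{<1}$. Set $g := \phi(\mathfrak{m}_0)^{-1}$ and $g_i := \phi(\mathfrak{m}_i)$. Any element of $\text{Supp}(\phi(f))$ has the form $\phi(\mathfrak{m}_0) \cdot g'$ with $g' \in \text{Supp}(\widehat{a}(g_1, \ldots, g_\ell))$, and this element is $> 1$ (resp. $\leq 1$) in $G$ precisely when $g' > g$ (resp. $g' \leq g$).

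Next, I would apply the Splitting Lemma with this $g$ and with $* \in \{>, =, <\}$ to produce $a_{>g}, a_{=g}, a_{<g} \in \mathcal{A}_\ell$ whose series are the subseries of $\widehat{a}$ obtained by restricting the support according to the comparison with $g$. Since the three conditions $g' > g$, $g' = g$, $g' < g$ partition $\text{Supp}(\widehat{a})$, we have $\widehat{a} = \widehat{a_{>g}} + \widehat{a_{=g}} + \widehat{a_{<g}}$, and by injectivity of the quasianalyticity morphism also $a = a_{>g} + a_{=g} + a_{<g}$ in $\mathcal{A}_\ell$. I then set
\[
f_\succ := \mathfrak{m}_0 \, a_{>g}(\overline{\mathfrak{m}}), \qquad f_\preceq := \mathfrak{m}_0 \, (a_{=g} + a_{<g})(\overline{\mathfrak{m}});
\]
both lie in $\mathcal{F}$, and \eqref{eq: phi embedding} together with the Splitting Lemma shows that $\text{Supp}(\phi(f_\succ)) \subseteq G^{>1}$ and $\text{Supp}(\phi(f_\preceq)) \subseteq \{h \in G : h \leq 1\}$. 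Since $\phi$ restricted to $\mathcal{M}$ is an ordered group embedding (so $\mathfrak{m} \succ 1 \Leftrightarrow \phi(\mathfrak{m}) > 1$), this translates to $f_\succ \in \mathcal{F}^{\succ 1}$ and $f_\preceq \in \mathcal{F}^{\preceq 1}$ in the sense of Definition \ref{def: F>M}. The identity $f = f_\succ + f_\preceq$ follows from $a = a_{>g} + a_{=g} + a_{<g}$.

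For uniqueness, suppose $f = f_\succ' + f_\preceq'$ is another such decomposition. Then $f_\succ - f_\succ' = f_\preceq' - f_\preceq$; applying the field embedding $\phi$ of Theorem \ref{thm: embedding}, the left-hand side has support contained in $G^{>1}$ while the right-hand side has support in $\{h \in G : h \leq 1\}$, so both Hahn series must vanish, and by injectivity of $\phi$ both differences are zero.

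I do not anticipate a genuine obstacle here: the Splitting Lemma already performs the delicate work of turning a set-theoretic partition of the support into a decomposition inside $\mathcal{A}_\ell$, and injectivity of the quasianalyticity morphism upgrades the formal identity to one in $\mathcal{A}_\ell$. The only minor wrinkle is that the Splitting Lemma is formulated for the strict comparisons $* \in \{<,=,>\}$ whereas the corollary asks for the non-strict relation $\preceq$; I handle this by absorbing the $=$-piece into the $\preceq$-part, which is legitimate because $\mathcal{A}_\ell$ is an $\mathbb{R}$-algebra.
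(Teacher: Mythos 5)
Your overall strategy is the paper's: write $f=\mathfrak{m}_{0}a(\overline{\mathfrak{m}})$, apply the Splitting Lemma with $g=\phi(\mathfrak{m}_{0})^{-1}$, and let the support condition translate through $\phi$. The paper is slightly more economical: after monomializing $f$ as $\mathfrak{m}_{0}U(\overline{\mathfrak{m}})$ (so $a=U$ is a unit, though that is not essential here), it simply sets $f_{\succ}:=\mathfrak{m}_{0}U_{>g}(\overline{\mathfrak{m}})$ and $f_{\preceq}:=f-f_{\succ}$, and the support condition on $\phi(f_{\preceq})$ falls out because $\phi(f_{\preceq})=\phi(f)-\phi(f_{\succ})=\phi(\mathfrak{m}_{0})\bigl(\widehat{a}(g_{1},\ldots,g_{\ell})\bigr)_{\leq g}$.

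However, there is a genuine gap in your step
\[
\widehat{a}=\widehat{a_{>g}}+\widehat{a_{=g}}+\widehat{a_{<g}}.
\]
The Splitting Lemma only controls the \emph{evaluated} Hahn series $\widehat{a_{*g}}(g_{1},\ldots,g_{\ell})$, not the formal series $\widehat{a_{*g}}$ as elements of $\mathbb{R}\llbracket X^{*}\rrbracket$. When the map $\mathbf{r}\mapsto g_{1}^{r_{1}}\cdots g_{\ell}^{r_{\ell}}$ fails to be injective on $\text{Supp}(\widehat{a})$ (for instance if some $g_{i}$ is a $\mathbb{K}$-power of another), distinct multi-indices collapse to the same monomial of $G$ and coefficients may cancel in the evaluation; as a result, the conditions \textquotedblleft$\widehat{a_{*g}}\subseteq\widehat{a}$ and $\widehat{a_{*g}}(g_{1},\ldots,g_{\ell})=(\widehat{a}(g_{1},\ldots,g_{\ell}))_{*g}$\textquotedblright{} do not pin down $\widehat{a_{*g}}$ as the subseries of $\widehat{a}$ supported on $\{\mathbf{r}:\prod g_{i}^{r_{i}}*g\}$. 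So the asserted formal-series identity does not follow from the lemma as stated, and the appeal to injectivity of $\widehat{\ \cdot\ }$ is applied one step too early.

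The fix is to apply $\phi$ rather than $\widehat{\ \cdot\ }$: since $\phi$ is an injective ring homomorphism on $\mathcal{F}$ (Theorem \ref{thm: embedding}), compute
\[
\phi(f_{\succ})+\phi(f_{\preceq})=\phi(\mathfrak{m}_{0})\left[\bigl(\widehat{a}(g_{1},\ldots,g_{\ell})\bigr)_{>g}+\bigl(\widehat{a}(g_{1},\ldots,g_{\ell})\bigr)_{=g}+\bigl(\widehat{a}(g_{1},\ldots,g_{\ell})\bigr)_{<g}\right]=\phi(f),
\]
and conclude $f=f_{\succ}+f_{\preceq}$ by injectivity of $\phi$. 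Alternatively, define $f_{\preceq}:=f-f_{\succ}$ directly, as the paper does, so there is nothing to check beyond the support of $\phi(f_{\preceq})$. Your uniqueness argument is correct as written.
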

\begin{proof}
Write $f=\mathfrak{m}_{0}U\left(\overline{\mathfrak{m}}\right)$,
with $U\in\mathcal{A}^{\times}$, and let $U_{>\phi\left(\mathfrak{m}_{0}^{-1}\right)}\in\mathcal{A}$
be as in the Splitting Lemma \ref{lem: splitting}. Set $f_{\succ}=\mathfrak{m}_{0}U_{>\phi\left(\mathfrak{m}_{0}^{-1}\right)}\left(\overline{\mathfrak{m}}\right)$
and $f_{\preceq}=f-f_{\succ}$.
\end{proof}

\section{The Main Theorem\label{sec:The-Main-Theorem}}

\subsection{The exponential closure of a GQC\label{subsec:The-exponential-closure}}

Let $\mathcal{A}$ be a GQC with field of exponents $\mathbb{K}$.
Recall from \cite[Definition 1.21]{rolin_servi:qeqa} that $\mathcal{L}_{\mathcal{A}}$
is the language of ordered rings augmented by a function symbol for
every function in $\mathcal{A}_{m,n,\mathbf{1}}$ (where $m,n\in\mathbb{N}$
and \textbf{$\mathbf{1}=\left(1,\ldots,1\right)\in\left(0,+\infty\right)^{m+n}$}).
The structure $\mathbb{R}_{\mathcal{A}}$ is the expansion of the
real ordered field where we interpret the function symbol for $f\in\mathcal{A}_{m,n,\mathbf{1}}$
as the function $f\restriction[0,1)^{m}\times\left(-1,1\right)^{n}$,
extended by zero outside the domain of $f$. By \cite[Theorems A and B]{rolin_servi:qeqa},
$\mathbb{R}_{\mathcal{A}}$ is o-minimal, polynomially bounded (with
field of exponents $\mathbb{K}$) and admits quantifier elimination
in the expansion of $\mathcal{L}_{\mathcal{A}}$ by symbols for the
multiplicative inverse $\left(\cdot\right)^{-1}$ and $n$th-roots
$\sqrt[n]{\cdot}$ $\left(n\in\mathbb{N}^{*}\right)$. If $\exp\restriction\left[0,1\right]\in\mathcal{A}_{1}$,
then \cite[Theorem B]{vdd:speiss:multisum} implies that the natural
expansion $\mathbb{R}_{\mathcal{A},\exp}:=\langle\mathbb{R}_{\mathcal{A}},\exp\rangle$
of $\mathbb{R}_{\mathcal{A}}$ by the unrestricted exponential function
admits quantifier elimination in the language $\mathcal{L}_{\mathcal{A},\exp}:=\mathcal{L}_{\mathcal{A}}\cup\left\{ \text{Exp},\text{Log}\right\} $.
We can actually say more: $\mathbb{R}_{\mathcal{A},\exp}$ satisfies
the following strong form of quantifier elimination.

\begin{thm}[Strong QE for $\mathbb{R}_{\mathcal{A},\exp}$]
\label{thm: strong QE-1}Let $\mathcal{A}$ be a GQC such that $\exp\restriction\left[0,1\right]\in\mathcal{A}_{1}$,
$D\subseteq\mathbb{R}^{N}$ and $\eta:D\longrightarrow\mathbb{R}$
be an $\mathbb{R}_{\mathcal{A},\exp}$-definable function. Then there
exist finitely many terms $t_{1},\ldots,t_{M}$ of the language $\mathcal{L}_{\mathcal{A},\exp}$
such that
\[
\forall x\in D\ \exists i\in\left\{ 1,\ldots,M\right\} ,\ \eta\left(x\right)=t_{i}\left(x\right).
\]
\end{thm}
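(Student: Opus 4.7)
The plan is to derive the strong form of QE---every definable function is piecewise a term---from the ordinary QE for $\mathbb{R}_{\mathcal{A},\exp}$ together with the corresponding strong statement for $\mathbb{R}_{\mathcal{A}}$, and then to absorb the auxiliary symbols $(\cdot)^{-1}$ and $\sqrt[n]{\cdot}$ by means of $\text{Exp}$ and $\text{Log}$.

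First, by \cite[Theorem B]{vdd:speiss:multisum}, the graph of $\eta$ is defined by a quantifier-free $\mathcal{L}_{\mathcal{A},\exp}$-formula $\varphi(x,y)$. Putting $\varphi$ in disjunctive normal form yields a finite partition of $D$ into subsets $D_j$ on each of which $\eta(x)$ satisfies a conjunction of atomic formulas built from $\mathcal{L}_{\mathcal{A},\exp}$-terms. Thus it suffices to produce, for each $j$, a single $\mathcal{L}_{\mathcal{A},\exp}$-term $t_j$ equal to $\eta$ on $D_j$.

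Second, I would establish the analogous ``term'' statement for $\mathbb{R}_{\mathcal{A}}$: by the QE of \cite[Theorems A, B]{rolin_servi:qeqa} in the language $\mathcal{L}_{\mathcal{A}}\cup\{(\cdot)^{-1},\sqrt[n]{\cdot}\}$, every $\mathbb{R}_{\mathcal{A}}$-definable function is, on each piece of a finite partition of its domain, equal to a term in that language. This is obtained by extracting, from each quantifier-free defining formula, an implicit-function-type solution for $y$, exploiting the monomialization and unit-factorization properties built into a GQC (cf.\ Lemma \ref{lem: monomialization-1} and Lemma \ref{lem: units are units-1}); these make the ``solving for $y$'' step in the polynomially bounded setting explicit, after passing to a suitable cell decomposition on which the leading monomial of the distinguished atomic equation is fixed.

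Third, I would transfer the result to $\mathbb{R}_{\mathcal{A},\exp}$ by induction on the nesting depth of $\text{Exp}$ and $\text{Log}$ in the terms appearing in the $D_j$-defining formula: any subterm of the form $\text{Exp}(s(x))$ or $\text{Log}(s(x))$ is treated by replacing $s$ with its piecewise term representation from the inductive hypothesis, and the auxiliary symbols are rewritten piecewise via $\text{Exp}$ and $\text{Log}$ (for $x>0$ one has $x^{-1} = \text{Exp}(-\text{Log}(x))$ and $\sqrt[n]{x}=\text{Exp}(\text{Log}(x)/n)$, with symmetric expressions for $x<0$), possibly at the cost of refining the $D_j$ further. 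The main obstacle is the second step: formalising the extraction of an explicit term from a quantifier-free defining formula; once this is in place for $\mathbb{R}_{\mathcal{A}}$, the passage to $\mathbb{R}_{\mathcal{A},\exp}$ is an essentially formal addition built on top of \cite[Theorem B]{vdd:speiss:multisum}.
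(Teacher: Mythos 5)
Your plan rests, at step~1, on the claim that from a conjunction of atomic $\mathcal{L}_{\mathcal{A},\exp}$-formulas implicitly defining $y$ as a function of $x$ one can explicitly solve for $y$ as an $\mathcal{L}_{\mathcal{A},\exp}$-term. That claim \emph{is} the content of the theorem, and quantifier elimination alone does not deliver it (real closed fields have QE, yet $\sqrt{x}$ is definable and not piecewise a term of the ordered-field language). The additional input you are missing is that \cite[Theorem~B]{vdd:speiss:multisum} gives not merely QE but a \emph{universal} axiomatization of the theory of $\mathbb{R}_{\mathcal{A},\exp}$ in a Skolemized language $\mathcal{L}^{*}\cup\{\text{Exp},\text{Log}\}$, where $\mathcal{L}^{*}$ Skolemizes $\text{Th}(\mathbb{R}_{\mathcal{A}})$. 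For a model-complete, universally axiomatized theory every substructure of a model is an elementary substructure, so $\eta(a)$ already lies in the $\mathcal{L}^{*}\cup\{\text{Exp},\text{Log}\}$-substructure generated by $a$, i.e.\ is a term in $a$; the routine compactness argument of \cite[Cor.~2.15]{dmm:exp} then covers $D$ by finitely many such terms. This model-theoretic argument, rather than any hands-on ``solving for $y$'', is the paper's proof.

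Your step~2 is then not something to re-derive: it is exactly \cite[Corollary~4.3]{rolin_servi:qeqa}, which the paper cites to replace the Skolem-function symbols of $\mathcal{L}^{*}$ piecewise by terms of $\mathcal{L}_{\mathcal{A}}\cup\{\left(\cdot\right)^{-1},\sqrt[n]{\cdot}\}$. The only remaining observation, which you did make, is that $\left(\cdot\right)^{-1}$ and $\sqrt[n]{\cdot}$ are themselves $\mathcal{L}_{\mathcal{A},\exp}$-terms via $\text{Exp}$ and $\text{Log}$. Your step~3 (induction on $\text{Exp}$/$\text{Log}$ nesting depth) is not a workable substitute for the compactness argument: the difficulty is not subterms $\text{Exp}(s(x))$ in $x$ alone, which are already $\mathcal{L}_{\mathcal{A},\exp}$-terms, but atomic formulas in which $y$ occurs nested inside $\text{Exp}$, $\text{Log}$, and $\mathcal{A}$-function symbols, and no syntactic induction on depth by itself unwinds such nestings to isolate $y$. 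The point of the model-theoretic route is precisely to avoid ever solving for $y$ by hand.
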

\begin{proof}
Let $T^{*}$ be a Skolemization of $\text{Th}\left(\mathbb{R}_{\mathcal{A}}\right)$
and let $\mathcal{L}^{*}$ be its language. By a routine compactness
argument (see for example the proof of \cite[Cor. 2.15]{dmm:exp}),
\cite[Theorem B]{vdd:speiss:multisum} implies the statement with
$\mathcal{L}^{*}$ in place of $\mathcal{L}_{\mathcal{A}}$. Then
we conclude by \cite[Corollary 4.3]{rolin_servi:qeqa}, since the
functions $\left(\cdot\right)^{-1},\sqrt[n]{\cdot}$ can be expressed
as $\mathcal{L}_{\mathcal{A},\exp}$-terms. 
\end{proof}
\begin{defn}
\label{def: L_A,exp-terms-1}We denote by $\mathcal{H}\left(\mathbb{R}_{\mathcal{A},\exp}\right)$
the Hardy field whose elements are the germs at $+\infty$ of the
unary functions definable in $\mathbb{R}_{\mathcal{A},\exp}$. Let
\[
\mathcal{T}\left(\mathcal{L}_{\mathcal{A},\exp}\right)=\left\{ t:\ t\text{ is an }\mathcal{L}_{\mathcal{A},\exp}\text{-term in at most }1\text{ free variable}\right\} .
\]
For $t\in\mathcal{T}\left(\mathcal{L}_{\mathcal{A},\exp}\right)$,
let $t^{\mathcal{H}}\in\mathcal{H}\left(\mathbb{R}_{\mathcal{A},\exp}\right)$
be the germ at $+\infty$ of the definable function given by the interpretation
of $t$ in $\mathbb{R}_{\mathcal{A},\exp}$.
\end{defn}
As an immediate consequence of Theorem \ref{thm: strong QE-1} we
obtain the following result.
\begin{cor}
\label{cor: strong QE for H-1}Let $\mathcal{A}$ be a GQC such that
$\exp\restriction\left[0,1\right]\in\mathcal{A}_{1}$. Then for every
$h\in\mathcal{H}\left(\mathbb{R}_{\mathcal{A},\exp}\right)$ there
exists $t\in\mathcal{T}\left(\mathcal{L}_{\mathcal{A},\exp}\right)$
such that $t^{\mathcal{H}}=h$.\hfill{}$\qed$
\end{cor}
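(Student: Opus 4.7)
The plan is to deduce the single-term representation for germs from the finite-term representation for definable functions provided by Theorem \ref{thm: strong QE-1}, using o-minimality of $\mathbb{R}_{\mathcal{A},\exp}$ to select a final segment on which a single term works.

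First, I would fix $h \in \mathcal{H}\left(\mathbb{R}_{\mathcal{A},\exp}\right)$ and choose a representative $\eta : (a,+\infty) \longrightarrow \mathbb{R}$ that is definable in $\mathbb{R}_{\mathcal{A},\exp}$. By Theorem \ref{thm: strong QE-1} applied to $\eta$, there exist finitely many terms $t_{1},\ldots,t_{M} \in \mathcal{T}\left(\mathcal{L}_{\mathcal{A},\exp}\right)$ such that for every $x \in (a,+\infty)$ there is some $i$ with $\eta(x) = t_{i}(x)$.

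Next, I would partition the domain: for each $i$, let
\[
D_{i} = \left\{ x \in (a,+\infty) : \eta(x) = t_{i}(x) \right\}.
\]
Each $D_{i}$ is definable in $\mathbb{R}_{\mathcal{A},\exp}$ (each $t_i$ defines a definable function wherever all the subterms, including $\mathrm{Log}$, are evaluated within their domains; here one has to be a little careful about the domain of $t_i^{\mathbb{R}_{\mathcal{A},\exp}}$, but one can simply intersect $D_i$ with this domain of definition), and $\bigcup_{i=1}^{M} D_{i} = (a,+\infty)$. By o-minimality of $\mathbb{R}_{\mathcal{A},\exp}$, each $D_{i}$ is a finite union of points and open intervals. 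Since the finite union of the $D_{i}$'s covers the half-line $(a,+\infty)$, at least one $D_{i_{0}}$ must contain an interval of the form $(b,+\infty)$ for some $b \ge a$.

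Finally, on $(b,+\infty)$ we have $\eta(x) = t_{i_{0}}(x)$, so the germs at $+\infty$ coincide: $t_{i_{0}}^{\mathcal{H}} = h$. This gives the desired term. The only subtle point, which is easily handled, is ensuring that $t_{i_0}$ is actually defined on a final segment; this is immediate since $\eta$ is, and the two agree on $(b,+\infty)$.
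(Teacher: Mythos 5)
Your proof is correct and is exactly the standard argument that the paper treats as immediate: apply Theorem~\ref{thm: strong QE-1} to a definable representative, note that the finitely many definable sets $D_i$ cover a half-line, and use o-minimality to extract one $D_{i_0}$ containing a tail $(b,+\infty)$. One small remark: in the structures $\mathbb{R}_{\mathcal{A}}$ and $\mathbb{R}_{\mathcal{A},\exp}$ the function symbols (including $\mathrm{Log}$) are by convention interpreted as total functions extended by zero outside their natural domains, so every term is everywhere defined and the domain caveat you raise is automatically handled.
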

For the rest of the section, we fix a GQC $\mathcal{A}$ with field
of exponents $\mathbb{K}$ and such that $\exp\restriction\left[0,1\right]\in\mathcal{A}_{1}$.
We suppose that either $\mathcal{A}=\text{an}^{*}$ or $\mathcal{A}$
is natural and truncation closed.

The main result of this paper, which is proved in Subsection \ref{subsec:The-Main-Theorem-1},
is the following.
\begin{namedthm}
{Main Theorem} There is a truncation closed ordered differential
field embedding
\[
\phi:\mathcal{H}\left(\mathbb{R}_{\mathcal{A},\exp}\right)\longrightarrow\mathbb{T},
\]
which provides a transasymptotic expansion for the germs in $\mathcal{H}\left(\mathbb{R}_{\mathcal{A},\exp}\right)$
and which is also an embedding of $\mathcal{L}_{\mathcal{A},\exp}$-structures.
\end{namedthm}

\subsection{Transseries\label{subsec:Transseries-1}}

Let $Y$ be a single variable, which we stipulate to be larger than
any real number. The Hahn field $\mathbb{R}\left(\left(Y^{\mathbb{R}}\right)\right)$
with monomials in the ordered multiplicative group $\left\{ Y^{r}:\ r\in\mathbb{R}\right\} $
and coefficients in $\mathbb{R}$ is just the field of fractions of
the ring $\mathbb{R}\left\llbracket \left(Y^{-1}\right)^{*}\right\rrbracket $. 

The field $\mathbb{T}$ of transseries, also denoted $\mathbb{R}\left(\left(Y^{\mathbb{R}}\right)\right)^{\text{LE}}$,
is constructed from $\mathbb{R}\left(\left(Y^{\mathbb{R}}\right)\right)$
by adding exponentials and logarithms in a suitable way. We refer
the reader to the construction in \cite[Section 2.8]{vdd_macintyre_marker:log-exp_series}.
Recall in particular that $\mathbb{T}$ is a proper subfield of the
Hahn field $\mathbb{R}\left(\left(G^{LE}\right)\right)$, where $G^{LE}$
is the ordered multiplicative group of $LE$-monomials. 

Moreover, $\mathbb{T}$ has a natural $\left\{ 0,1,-,+,\cdot,\text{Exp,Log}\right\} $-structure,
which is an elementary extension of the real exponential field \cite[Corollary 2.8]{dmm:series}.

Next, we turn $\mathbb{T}$ into an $\mathcal{L}_{\mathcal{A},\exp}$-structure:
for $m,n\in\mathbb{N},\ \sigma_{1},\ldots,\sigma_{m+n}\in\mathbb{T}$
and $f\in\mathcal{A}_{m,n,\mathbf{1}}$, we interpret in $\mathbb{T}$
the function symbol for $f$ evaluated at $\left(\sigma_{1},\ldots,\sigma_{m+n}\right)$
as the transseries $\widehat{f}\left(\sigma_{1},\ldots,\sigma_{m+n}\right)$,
if $\text{lm}\left(\sigma_{i}\right)<1$ for all $i=1,\ldots,m+n$,
and zero otherwise.

Hence, there is a natural map from $\mathcal{T}\left(\mathcal{L}_{\mathcal{A},\exp}\right)$
to $\mathbb{T}$ associating to a term $t$ the interpretation $t^{\mathbb{T}}$
of $t$ in $\mathbb{T}$ as a unary function, evaluated at the transseries
$Y$.
\begin{example}
\label{exa: tilde and hat-1}If $\underline{f}$ is a binary function
symbol in $\mathcal{L}_{\mathcal{A}}$, representing the function
$f\in\mathcal{A}_{2,0,\mathbf{1}}$, and $v$ is a meta-variable,
consider the term
\[
t\left(v\right)=\text{Exp}\left(\text{Exp}\left(\left(1+1\right)\cdot v\right)\cdot\underline{f}\left(\text{Exp}\left(-\text{Log}\left(v\right)\right)\right),\text{Exp}\left(-v\right)\right).
\]
Then $t^{\mathcal{H}}$ is the germ at $+\infty$ of the definable
function $y\longmapsto\text{e}^{\text{e}^{2y}f\left(y^{-1},\text{e}^{-y}\right)}$
and $t^{\mathbb{T}}$ is the transseries $\text{e}^{\text{e}^{2Y}\widehat{f}\left(Y^{-1},\text{e}^{-Y}\right)}$,
where $\widehat{f}\in\mathbb{R}\left\llbracket X_{1}^{*},X_{2}^{*}\right\rrbracket $
is the generalized power series associated to the function $f$ by
the quasianalyticity morphism $\ \widehat{}\ $ in \ref{eq:qa morphism hat-1}.

Notice that it is not easy to read off $t^{\mathbb{T}}$ its expression
as a Hahn series, and in particular its leading term. 
\end{example}

\subsection{$\mathcal{A}$-transseries\label{subsec:Monomialized-germs}}

In this section we construct a differential subfield $\mathcal{F}_{\mathcal{A}}$
of $\mathcal{H}\left(\mathbb{R}_{\mathcal{A},\exp}\right)$ and a
truncation closed ordered differential field embedding
\[
\phi:\mathcal{F}_{\mathcal{A}}\longrightarrow\mathbb{T},
\]
which provides a transasymptotic expansion for the germs in $\mathcal{F}_{\mathcal{A}}$.
The construction is inspired by \cite[Section 2]{kaiser_speissegger:analytic_continuation}.

We start by constructing, inductively for $n\geq-1$, a group of monomials
$\mathcal{M}_{n}\subseteq\mathcal{H}\left(\mathbb{R}_{\mathcal{A},\exp}\right)$
(see Definition \ref{def: monmial germs}) stable under $\mathbb{K}$-powers
and an ordered group embedding $\phi_{n}:\mathcal{M}_{n}\longrightarrow G^{LE}$
which respects $\mathbb{K}$-powers (see Proviso \ref{proviso}).
We then apply the Embedding Theorem \ref{thm: embedding} to extend
$\phi_{n}$ to a truncation closed ordered field embedding $\phi_{n}:\mathcal{F}_{n}\longrightarrow\mathbb{R}\left(\left(G^{LE}\right)\right)$,
where $\mathcal{F}_{n}$ is constructed from $\mathcal{M}_{n}$ as
$\mathcal{F}$ is constructed from $\mathcal{M}$ in Definition \ref{def: F}.
We prove furthermore that $\mathcal{F}_{n}$ is a differential subfield
of $\mathcal{H}\left(\mathbb{R}_{\mathcal{A},\exp}\right)$, that
$\phi_{n}\left(\mathcal{F}_{n}\right)\subseteq\mathbb{T}$ and that
$\phi_{n}$ respects derivatives.
\begin{itemize}
\item Let $\mathcal{M}_{-1}=\left\{ 1\right\} $ and $\phi_{-1}:1\longmapsto1\in G^{LE}$.
Then $\mathcal{F}_{-1}=\mathbb{R}$ and $\phi$ is the identity map.
\item Let $\mathcal{M}_{0}=\left\{ y^{r}:\ r\in\mathbb{K}\right\} \subseteq\mathcal{H}\left(\mathbb{R}_{\mathcal{A},\exp}\right)$
and $\phi_{0}:y^{r}\longmapsto Y^{r}\in G^{LE}$. Clearly, $\mathcal{M}_{0}$
is a group of monomials stable under $\mathbb{K}$-powers and $\phi_{0}$
is an ordered group embedding which respects $\mathbb{K}$-powers.
By the Embedding Theorem \ref{thm: embedding}, $\phi$ extends to
$\mathcal{F}_{0}=\left\{ y^{r_{0}}a\left(y^{r_{1}},\ldots,y^{r_{\ell}}\right):\ \ell\in\mathbb{N}^{*},r_{0}\in\mathbb{K},r_{1},\ldots,r_{\ell}\in\mathbb{K}^{<0},a\in\mathcal{A}_{\ell}\right\} $.
By Proposition \ref{prop: derivation} $\mathcal{F}_{0}$ is a differential
field and by construction $\phi$ is a differential field embedding.
Moreover it is clear that $\phi\left(\mathcal{F}_{0}\right)\subseteq\mathbb{T}$.\\
Define $\mathcal{F}_{0}^{>1}=\mathcal{F}_{0}^{\succ1}\cup\left\{ 0\right\} $.
It is an ordered additive $\mathbb{K}$-vector space.
\item Suppose we have already defined, for $k\leq n$, groups of monomials
$\mathcal{M}_{k}\subseteq\mathcal{H}\left(\mathbb{R}_{\mathcal{A},\exp}\right)$
stable under $\mathbb{K}$-powers, ordered differential field embeddings
$\phi_{k}:\mathcal{F}_{k}\longrightarrow\mathbb{T}$ and ordered $\mathbb{K}$-vector
fields $\mathcal{F}_{k}^{>\mathcal{M}_{k-1}}\subseteq\mathcal{F}_{k}$.\\
Let $\mathcal{M}_{n+1}=\mathcal{M}_{n}\exp\left(\mathcal{F}_{n}^{>\mathcal{M}_{n-1}}\right)$
and, given a monomial $\mathfrak{m}=\mathfrak{m}_{0}\text{e}^{f}\in\mathcal{M}_{n+1}$
(with $\mathfrak{m}_{0}\in\mathcal{M}_{n}$ and $f\in\mathcal{F}_{n}^{>\mathcal{M}_{n-1}}$),
set $\phi_{n+1}\left(\mathfrak{m}\right)=\phi_{n}\left(\mathfrak{m}_{0}\right)\text{e}^{\phi_{n}\left(f\right)}$.\\
We prove that $\mathcal{M}_{n+1}$ is a group of monomials stable
under $\mathbb{K}$-powers: the only nontrivial statement is that
every $\mathfrak{m}=\mathfrak{m}_{0}\text{e}^{f}\in\mathcal{M}_{n+1}$
is a monomial, i.e. $\mathfrak{m}<1\Longrightarrow\mathfrak{m}\prec1$.
For this, write $f=\mathfrak{n}_{0}U\left(\overline{\mathfrak{n}}\right)$,
where $U\in\mathcal{A}^{\times}$ and $\mathfrak{n}_{0},\overline{\mathfrak{n}}\subseteq\mathcal{M}_{n}$.
Since $f\in\mathcal{F}_{n}^{>\mathcal{M}_{n-1}},\ \mathfrak{n}_{0}>\mathcal{M}_{n-1}$.
Write $\mathfrak{m}_{0}=\mathfrak{m}_{1}\text{e}^{f_{1}}$, with $\mathfrak{m}_{1}\in\mathcal{M}_{n-1}\prec\mathfrak{n}_{0}$
and $f_{1}=\mathfrak{n}_{1}U_{1}\left(\overline{\mathfrak{w}}\right)\asymp\mathfrak{n}_{1}\in\mathcal{M}_{n-1}\prec\mathfrak{n}_{0}$.
From $\mathfrak{m}<1$ we deduce that $0>\log\mathfrak{m}=\log\mathfrak{m}_{1}+f_{1}+f$.
Now, if $\mathfrak{m}_{1}\succ1$ then $\log\mathfrak{m}_{1}\prec\mathfrak{m}_{1}\prec\mathfrak{n}_{0}\asymp f$
and if $\mathfrak{m}_{1}\prec1$ then $-\log\mathfrak{m}_{1}=\log\mathfrak{m}_{1}^{-1}\prec\mathfrak{m}_{1}^{-1}\in\mathcal{M}_{n-1}\prec\mathfrak{n}_{0}\asymp f$.
Hence $f\succ\log\mathfrak{m}_{1}+f_{1}$ and $0<\log\mathfrak{m}\asymp\mathfrak{n}_{0}\succ\mathcal{M}_{n-1}\succeq1$.
So $\log\mathfrak{m}$ tends to $-\infty$ and thus $\mathfrak{m}$
tends to zero.\\
We construct $\mathcal{F}_{n+1}$ from $\mathcal{M}_{n+1}$ as $\mathcal{F}$
is constructed from $\mathcal{M}$ in Definition \ref{def: F}. \\
It is clear that $\phi_{n+1}$ is an ordered group embedding respecting
$\mathbb{K}$-powers, hence we apply the Embedding Theorem \ref{thm: embedding}
to extend $\phi_{n+1}$ to the ordered field $\mathcal{F}_{n+1}$.\\
We prove that $\mathcal{F}_{n+1}$ is a differential subfield of $\mathcal{H}\left(\mathbb{R}_{\mathcal{A},\exp}\right)$:
since $\left(\mathfrak{m}\text{e}^{f}\right)'=\mathfrak{m}'\text{e}^{f}+\mathfrak{m}f'\text{e}^{f}\in\mathcal{F}_{n+1}$
and since clearly $\mathcal{F}_{n+1}$ is a subfield of the Hardy
field $\mathcal{H}\left(\mathbb{R}_{\mathcal{A},\exp}\right)$, we
may apply Proposition \ref{prop: derivation}.\\
It is clear by construction that $\phi_{n+1}$ respects derivatives
and that $\phi_{n+1}\left(\mathcal{F}_{n+1}\right)\subseteq\mathbb{T}$.\\
Finally, we let $\mathcal{F}_{n+1}^{>\mathcal{M}_{n}}=\mathcal{F}_{n+1}^{\succ\mathcal{M}_{n}}\cup\left\{ 0\right\} $
(see Definition \ref{def: F>M}).
\end{itemize}
It is easy to see by induction on $n$ that, for $n\geq1$,
\[
\mathcal{F}_{n}^{>1}:=\left\{ f\in\mathcal{F}_{n}:\ \text{Supp}\left(\phi_{n}\left(f\right)\right)>1\right\} =\mathcal{F}_{0}^{>1}\oplus\mathcal{F}_{1}^{>\mathcal{M}_{0}}\oplus\cdots\oplus\mathcal{F}_{n}^{>\mathcal{M}_{n-1}}
\]
and
\begin{equation}
\mathcal{M}_{n+1}=\mathcal{M}_{0}\exp\left(\mathcal{F}_{n}^{>1}\right).\label{eq: F>1}
\end{equation}

\medskip{}

Next, we introduce iterated logarithms: for $k\in\mathbb{N}$, let
$\log_{k+1}=\log\circ\log_{k}$ . Define $\mathcal{M}_{n,k}=\mathcal{M}_{n}\circ\log_{k},\ \mathcal{F}_{n,k}=\mathcal{F}_{n}\circ\log_{k}$
and $\phi_{n,k}:\mathcal{F}_{n,k}\longrightarrow\mathbb{T}$ by $\phi_{n,k}\left(f\circ\log_{k}\left(y\right)\right)=\phi_{n}\left(f\right)\circ\log_{k}\left(Y\right)$.
As right-composition by $\log_{k}$ acts as a change of variables
(both for germs and for transseries) and because $\left(\log_{k}y\right)'\in\mathcal{M}_{1,k-1}$,
$\phi_{n,k}$ is still a truncation closed ordered differential field
embedding, which provides a transasymptotic expansion for the germs
in $\mathcal{F}_{n,k}$ in a scale given by monomials in $\mathcal{M}_{n,k}$.

Finally, define $\mathcal{M}_{\mathcal{A}}=\bigcup_{n,k}\mathcal{M}_{n,k},\ \mathcal{F}_{\mathcal{A}}=\bigcup_{n,k}\mathcal{F}_{n,k}$
and $\phi=\bigcup_{n,k}\phi_{n,k}$. Then by the above, $\mathcal{M}_{\mathcal{A}}$
is a group of monomials stable under $\mathbb{K}$-powers, $\mathcal{F}_{\mathcal{A}}$
is a differential subfield of $\mathcal{H}\left(\mathbb{R}_{\mathcal{A},\exp}\right)$
and $\phi$ is a truncation closed ordered differential field embedding
of $\mathcal{F}_{\mathcal{A}}$ into $\mathbb{T}$ (mapping $\phi\left(\mathcal{M}_{\mathcal{A}}\right)$
into $G^{LE}$) which respects $\mathbb{K}$-powers and which provides
a transasymptotic expansion in a scale of monomials in $\mathcal{M}_{\mathcal{A}}$.
In particular, Lemma \ref{lem: composition} and Corollaries \ref{cor: monom of F}
and \ref{cor: splitting f F} apply to $\mathcal{F}_{\mathcal{A}}$.
Furthermore, as above we have
\begin{equation}
\mathcal{F}_{n,k}^{>1}=\mathcal{F}_{0,k}^{>1}\oplus\mathcal{F}_{1,k}^{>\mathcal{M}_{0,k}}\oplus\cdots\oplus\mathcal{F}_{n,k}^{>\mathcal{M}_{n-1,k}}\ \text{and\ }\mathcal{M}_{n+1,k}=\mathcal{M}_{0,k}\exp\left(\mathcal{F}_{n,k}^{>1}\right).\label{eq: F_k >1}
\end{equation}

We call $\mathbb{T}_{\mathcal{A}}:=\phi\left(\mathcal{F}_{\mathcal{A}}\right)\subseteq\mathbb{T}$
the field of $\mathcal{A}$-transseries.

\subsection{Proof of the Main Theorem\label{subsec:The-Main-Theorem-1}}

In this section we prove that actually $\mathcal{F}_{\mathcal{A}}=\mathcal{H}\left(\mathbb{R}_{\mathcal{A},\exp}\right)$,
hence completing the proof of the Main Theorem.
\begin{prop}
\label{prop F=00003DH}For every $t\in\mathcal{T}\left(\mathcal{L}_{\mathcal{A},\exp}\right)$,
$t^{\mathcal{H}}\in\mathcal{F}_{\mathcal{A}}$.
\end{prop}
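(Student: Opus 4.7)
The plan is a structural induction on the term $t\in\mathcal{T}(\mathcal{L}_{\mathcal{A},\exp})$. The base cases are immediate: a real constant gives a germ in $\mathbb{R}\subseteq\mathcal{F}_{\mathcal{A}}$, and the variable $y$ yields $y\in\mathcal{M}_{0}\subseteq\mathcal{F}_{\mathcal{A}}$. The ring operations $+,\cdot,-$ and the multiplicative inverse (with the convention $0^{-1}=0$) preserve $\mathcal{F}_{\mathcal{A}}$, since it is an ordered field. For $n$-th roots, I would use Corollary~\ref{cor: monom of F} to monomialize $f:=t_{1}^{\mathcal{H}}=\mathfrak{m}_{0}U(\overline{\mathfrak{n}})$ with $U(0)>0$ (when $f>0$ eventually, the other cases reducing to this or to $0$), then $\sqrt[n]{f}=\mathfrak{m}_{0}^{1/n}U(0)^{1/n}\sqrt[n]{1+V(\overline{\mathfrak{n}})}$ with $V:=U/U(0)-1$ vanishing at zero; since $\mathbb{K}\supseteq\mathbb{Q}$, $\mathfrak{m}_{0}^{1/n}\in\mathcal{M}_{\mathcal{A}}$, and Lemma~\ref{lem: composition} applied to $\sqrt[n]{1+\cdot}\in\mathcal{A}$ places the third factor in $\mathcal{F}_{\mathcal{A}}$.

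For a function symbol application $t=\underline{f}(t_{1},\ldots,t_{m+n})$, set $g_{i}:=t_{i}^{\mathcal{H}}\in\mathcal{F}_{\mathcal{A}}$ by induction and $c_{i}:=\lim_{y\to+\infty}g_{i}\in\mathbb{R}\cup\{\pm\infty\}$, which exists since $\mathcal{F}_{\mathcal{A}}$ lies in the Hardy field. If $\overline{c}$ fails to lie in the closure of $[0,1)^{m}\times(-1,1)^{n}$, or the germs approach the boundary from outside, then $t^{\mathcal{H}}\equiv 0$ eventually by the extension-by-zero convention. Otherwise every $c_{i}$ is interior to the domain, so $u_{i}:=g_{i}-c_{i}\in\mathcal{F}_{\mathcal{A}}^{\prec 1}$, and the shifted germ $f_{\overline{c}}(\overline{u}):=f(\overline{c}+\overline{u})$ belongs to $\mathcal{A}$ by the closure properties of GQCs from Definition~\ref{def: QA class-1}; Lemma~\ref{lem: composition} then gives $f(\overline{g})=f_{\overline{c}}(\overline{u})\in\mathcal{F}_{\mathcal{A}}$.

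For $t=\text{Exp}(t_{1})$ with $f:=t_{1}^{\mathcal{H}}\in\mathcal{F}_{n,k}$, apply Corollary~\ref{cor: splitting f F} to write $f=f_{\succ}+f_{\preceq}$. The direct-sum decomposition in~\eqref{eq: F_k >1} expresses $f_{\succ}=g_{0}+g_{1}+\cdots+g_{n}$ with $g_{0}\in\mathcal{F}_{0,k}^{>1}$ and $g_{j}\in\mathcal{F}_{j,k}^{>\mathcal{M}_{j-1,k}}$ for $j\geq 1$; the recursive identities $\mathcal{M}_{1,k}=\mathcal{M}_{0,k}\exp(\mathcal{F}_{0,k}^{>1})$ and $\mathcal{M}_{j+1,k}=\mathcal{M}_{j,k}\exp(\mathcal{F}_{j,k}^{>\mathcal{M}_{j-1,k}})$ place each factor $\exp(g_{j})$ in $\mathcal{M}_{j+1,k}$, so $\exp(f_{\succ})\in\mathcal{M}_{n+1,k}\subseteq\mathcal{F}_{\mathcal{A}}$. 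For $\exp(f_{\preceq})$, write $f_{\preceq}=c+u$ with $c:=\lim f_{\preceq}\in\mathbb{R}$ and $u\prec 1$, noting that $u$ is of constant sign eventually (Hardy field property); since $\exp\restriction[0,1]\in\mathcal{A}_{1}$, Lemma~\ref{lem: composition} (combined with $\exp(u)^{-1}=\exp(-u)$ to handle the sign) gives $\exp(f_{\preceq})=e^{c}\exp(u)\in\mathcal{F}_{\mathcal{A}}$.

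For $t=\text{Log}(t_{1})$ with $f:=t_{1}^{\mathcal{H}}$, the case $f\leq 0$ eventually yields $\log(f)\equiv 0$ by convention. Otherwise I monomialize $f=\mathfrak{m}_{0}U(\overline{\mathfrak{n}})$ with $U(0)>0$ and split $\log f=\log\mathfrak{m}_{0}+\log U(0)+\log(1+V(\overline{\mathfrak{n}}))$, where $V:=U/U(0)-1$ vanishes at zero; Lemma~\ref{lem: composition} applied to $\log(1+\cdot)\in\mathcal{A}$ handles the last summand. For $\log\mathfrak{m}_{0}$ with $\mathfrak{m}_{0}\in\mathcal{M}_{n,k}$, an inner induction on $n$ unfolds the recursive presentation $\mathfrak{m}_{0}=(y^{r_{0}}\exp(g_{1})\cdots\exp(g_{n}))\circ\log_{k}$, giving $\log\mathfrak{m}_{0}=(r_{0}\log y+g_{1}+\cdots+g_{n})\circ\log_{k}$; each summand lies in $\mathcal{F}_{\mathcal{A}}$, since $\log y=\log_{1}y\in\mathcal{M}_{0,1}$ and composition with $\log_{k}$ keeps one inside $\bigcup_{n,k}\mathcal{F}_{n,k}=\mathcal{F}_{\mathcal{A}}$. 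The main obstacle I anticipate is this $\text{Log}$ case: one must carefully verify that the unfolding of a nested monomial in $\mathcal{M}_{n,k}$ produces a logarithm landing back in the filtration $\{\mathcal{F}_{n,k}\}$, coordinating the recursive $\exp$-construction of $\mathcal{M}_{n,k}$ with the iterated $\log_{k}$-shifts. Once the proposition is proved, Corollary~\ref{cor: strong QE for H-1} gives $\mathcal{H}(\mathbb{R}_{\mathcal{A},\exp})\subseteq\mathcal{F}_{\mathcal{A}}$, and the reverse inclusion is immediate from the construction, completing the Main Theorem.
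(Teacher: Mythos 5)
Your proof follows essentially the same route as the paper: a structural induction on terms, using the field closure of $\mathcal{F}_{\mathcal{A}}$ for ring operations, the Composition Lemma~\ref{lem: composition} for function symbols, Corollary~\ref{cor: splitting f F} together with the decomposition~\eqref{eq: F>1}/\eqref{eq: F_k >1} for $\mathrm{Exp}$, and Corollary~\ref{cor: monom of F} plus the recursive structure of $\mathcal{M}_{n,k}$ for $\mathrm{Log}$. The one thing worth noting is that $(\cdot)^{-1}$ and $\sqrt[n]{\cdot}$ are not primitive symbols of $\mathcal{L}_{\mathcal{A},\exp}=\mathcal{L}_{\mathcal{A}}\cup\{\mathrm{Exp},\mathrm{Log}\}$ (they are definable as $\exp(-\log(\cdot))$ and $\exp(\tfrac{1}{n}\log(\cdot))$), so your explicit treatment of those cases is harmless but redundant, and the $\mathrm{Log}$ case you flag as the main obstacle is resolved exactly as you anticipate, via the identity $\mathcal{M}_{n+1,k}=\mathcal{M}_{0,k}\exp\left(\mathcal{F}_{n,k}^{>1}\right)$ which shows $\log\mathfrak{m}_{0}$ lands in the $\mathcal{F}_{\bullet,k+1}$ layer.
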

\begin{proof}
First let $t$ be a term of the language $\mathcal{L}_{\mathcal{A}}\cup\left\{ \text{Exp}\right\} $.
We argue by induction on the complexity of $t$ as a term that $t^{\mathcal{H}}\in\bigcup_{n}\mathcal{F}_{n}$.

If $t=t_{1}+t_{2}$ or $t=t_{1}\cdot t_{2}$, then the statement follows
by induction from the definition of $t^{\mathcal{H}}$ and the fact
that $\bigcup_{n}\mathcal{F}_{n}$ is a field. If $t=\underline{a}\left(t_{1},\ldots,t_{\ell}\right)$
with $a\in\mathcal{A}_{\ell}$ and $t_{i}^{\mathcal{H}}\in\left(\bigcup_{n}\mathcal{F}_{n}\right)^{<1}$
then conclude by the Composition Lemma \ref{lem: composition}. If
$t=\exp\left(t_{0}\right)$ with $f:=t_{0}^{\mathcal{H}}\in\mathcal{F}_{n}$,
for some $n\in\mathbb{N}$, then apply Corollary \ref{cor: splitting f F}
to write $f=f_{\succ}+f_{\preceq}$. Note that $f_{\succ}\in\mathcal{F}_{n}^{>1}$,
so by \eqref{eq: F>1}, $\exp\left(f_{\succ}\right)\in\mathcal{M}_{n+1}$.
Now write $f_{\preceq}=r+f_{\prec}$, with $r\in\mathbb{R}^{\times}$
and $f_{\prec}\in\mathcal{F}_{n}^{\prec1}$ and notice that, by the
Composition Lemma \ref{lem: composition}, $\exp\left(f_{\preceq}\right)=\text{e}^{r}\sum_{i\geq0}\frac{\left(f_{\prec}\right)^{i}}{i!}\in\mathcal{F}_{n}$.

Finally, to handle the logarithm, it is enough to notice that if $t\in\mathcal{T}\left(\mathcal{L}_{\mathcal{A},\exp}\right)$
is such that $t^{\mathcal{H}}\in\mathcal{F}_{n,k}$ for some $n,k\in\mathbb{N}$,
then $\log\left(t^{\mathcal{H}}\right)\in\mathcal{F}_{n,k+1}$. To
see this, apply Corollary \ref{cor: monom of F} to write $t^{\mathcal{H}}=\mathfrak{m}_{0}U\left(\overline{\mathfrak{m}}\right)$,
with $\mathfrak{m}_{0},\overline{\mathfrak{m}}\subseteq\mathcal{M}_{n,k}$
and $u\in\mathcal{A}^{\times}$. Notice that $\log\circ U\in\mathcal{A}$
and, using \eqref{eq: F_k >1}, $\log\left(\mathfrak{m}_{0}\right)\in\mathcal{F}_{n-1,k+1}$,
hence
\[
\log\left(t^{\mathcal{H}}\right)=\log\left(\mathfrak{m}_{0}\right)+\log\circ U\left(\overline{\mathfrak{m}}\right)\in\mathcal{F}_{n,k+1},
\]
as claimed.
\end{proof}
The Main Theorem now follows from Corollary \ref{cor: strong QE for H-1}
(it is clear from the definition that $\phi$ is an embedding of $\mathcal{L}_{\mathcal{A},\exp}$-structures).
Note that the assumption that $\exp\restriction\left[0,1\right]\in\mathcal{A}_{1}$
is needed in Corollary \ref{cor: strong QE for H-1}.

\section{Applications to non-definability results\label{sec:Applications-to-non-definability-1}}

Recall that $\mathbb{T}_{\mathcal{A}}=\phi\left(\mathcal{H}\left(\mathbb{R}_{\mathcal{A},\exp}\right)\right)\subseteq\mathbb{T}$.
\begin{prop}
\label{prop: zeta not def}Let $\mathcal{A}$ be a classical GQC such
that $\exp\restriction\left[0,1\right]\in\mathcal{A}_{1}$. Then $\zeta\restriction\left(1,+\infty\right)$
is not definable in $\mathbb{R}_{\mathcal{A},\exp}$.
\end{prop}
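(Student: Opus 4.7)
The plan is to derive a contradiction from the assumption that $\zeta\restriction(1,+\infty)$ is definable, by computing the transasymptotic expansion of $\zeta$ and using the classicalness of $\mathcal{A}$ to bound the size of the support of any $\phi(f)$ with $f\in\mathcal{F}_{\mathcal{A}}$. If $\zeta\restriction(1,+\infty)$ were definable in $\mathbb{R}_{\mathcal{A},\exp}$, then its germ at $+\infty$ would belong to $\mathcal{H}(\mathbb{R}_{\mathcal{A},\exp})=\mathcal{F}_{\mathcal{A}}$ by the Main Theorem, and Corollary \ref{cor: monom of F} (which applies to $\mathcal{F}_{\mathcal{A}}$, as noted at the end of Subsection \ref{subsec:Monomialized-germs}) would give a representation $\zeta=\mathfrak{m}_0\,a(\mathfrak{m}_1,\ldots,\mathfrak{m}_\ell)$ with $\mathfrak{m}_0\in\mathcal{M}_{\mathcal{A}}$, each $\mathfrak{m}_i\in\mathcal{M}_{\mathcal{A}}^{<1}$ for $i\geq 1$, and $a\in\mathcal{A}_\ell$.

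The first key observation is that, since $\mathcal{A}$ is classical, $\widehat{a}$ has support in $\mathbb{N}^\ell$, so
\[
\text{Supp}(\phi(\zeta))\subseteq\phi(\mathfrak{m}_0)\cdot\langle\phi(\mathfrak{m}_1),\ldots,\phi(\mathfrak{m}_\ell)\rangle^{\mathbb{Z}};
\]
taking logarithms in $\mathbb{T}$ forces the logarithms of monomials in $\text{Supp}(\phi(\zeta))$ to lie in the affine $\mathbb{Z}$-lattice $\log\phi(\mathfrak{m}_0)+\sum_{i=1}^{\ell}\mathbb{Z}\log\phi(\mathfrak{m}_i)$. The second key observation is that the transasymptotic expansion of $\zeta$ contains the monomials $e^{-Y\log n}$ for all $n\geq 2$, with coefficient $1$. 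Indeed, starting from the convergent Dirichlet expansion $\zeta(y)=\sum_{n\geq 1}n^{-y}$, each partial sum $f_N=\sum_{n=1}^{N}n^{-y}$ lies in $\mathcal{F}_{\mathcal{A}}$ and satisfies $\zeta-f_N\sim_{+\infty}(N+1)^{-y}$, so the transasymptotic expansion property (Definition \ref{def: trans expansion}), together with the injectivity of $\phi$, identifies the $N$-th term of $\phi(\zeta)$ by induction on $N$. Combining the two observations yields
\[
\{-Y\log n:n\geq 2\}\subseteq\log\phi(\mathfrak{m}_0)+\sum_{i=1}^{\ell}\mathbb{Z}\log\phi(\mathfrak{m}_i).
\]

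To extract the contradiction I apply the $\mathbb{R}$-linear functional $L:\mathbb{T}\to\mathbb{R}$ reading off the coefficient of the monomial $Y$: writing $\alpha_i:=L(\log\phi(\mathfrak{m}_i))$, the inclusion projects to $\{-\log n:n\geq 2\}\subseteq\alpha_0+\sum_{i=1}^{\ell}\mathbb{Z}\alpha_i$, so the $\mathbb{Q}$-linear span of $\{\log n:n\geq 2\}$ in $\mathbb{R}$ has dimension at most $\ell+1$. This contradicts the $\mathbb{Q}$-linear independence of $\{\log p:p\text{ prime}\}$, which is immediate from unique factorization. The step I expect to be the main obstacle is the second key observation: one must verify carefully that the transasymptotic expansion machinery pins down the successive leading monomials and coefficients of $\phi(\zeta)$ to exactly match those of the Dirichlet series, with no additional monomials intruding between the $e^{-Y\log n}$'s. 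The rest of the argument is a clean dimension count whose viability hinges on the classicalness of $\mathcal{A}$, which restricts the exponents of $\widehat{a}$ to $\mathbb{N}$ rather than to a larger field $\mathbb{K}$.
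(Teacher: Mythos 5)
Your proof is correct, and it reaches the same contradiction as the paper's but by a noticeably more explicit route. The paper first reparametrizes, replacing $\zeta$ by $f(y)=\zeta(\log y)$ so that the putative transasymptotic expansion $\sum_n Y^{-\log n}$ sits inside $\mathbb{R}\left\llbracket \left(Y^{-1}\right)^{*}\right\rrbracket$; it then invokes a general (and only asserted) claim that for classical $\mathcal{A}$ the field $\mathbb{T}_{\mathcal{A}}$ consists of grid-based transseries, so every support in $\mathbb{R}\left\llbracket \left(Y^{-1}\right)^{*}\right\rrbracket$ is a finitely generated $\mathbb{N}$-lattice, and concludes by noting $\{Y^{-\log n}\}$ is not finitely generated (without spelling out why). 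You instead work with $\zeta$ directly, derive the lattice constraint on $\text{Supp}(\phi(\zeta))$ from a single representation $\zeta=\mathfrak{m}_0\,a(\overline{\mathfrak{m}})$ with $\widehat{a}\in\mathbb{R}\llbracket X\rrbracket$ (you credit Corollary \ref{cor: monom of F}, but the raw definition of $\mathcal{F}_{\mathcal{A}}$ already suffices here — monomialization into a unit is not needed), and then make the final non-finite-generation step fully explicit: the coefficient-of-$Y$ functional $L$ projects the support constraint onto an affine $\mathbb{Z}$-lattice of rank $\le\ell$ in $\mathbb{R}$, which cannot contain $\{-\log n\}_{n\geq 2}$ because $\{\log p: p \text{ prime}\}$ is $\mathbb{Q}$-linearly independent. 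This makes the role of classicalness (support in $\mathbb{N}^{\ell}$ rather than $\mathbb{K}_{\geq 0}^{\ell}$) and the number-theoretic input completely transparent. The step you flagged as delicate — that $\phi(\zeta)$ really begins with the terms $\sum_n n^{-Y}$ with no intruding monomials — does go through by the induction you indicate: truncation closure together with $\zeta-f_N\sim_{+\infty}(N+1)^{-y}$, the transasymptotic expansion property of Definition \ref{def: trans expansion}, and injectivity of $\phi$ force the $N$-th truncation of $\phi(\zeta)$ to be $\phi(f_N)=\sum_{n\leq N}n^{-Y}$ for every $N$; you should perhaps also record that $n^{-y}=\exp(-y\log n)\in\mathcal{M}_{1}\subseteq\mathcal{F}_{\mathcal{A}}$, with $\phi(n^{-y})=\mathrm{e}^{-Y\log n}$, so that $f_N\in\mathcal{F}_{\mathcal{A}}$ as you assume.
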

\begin{proof}
It is easy to see from the construction of $\mathcal{F}_{\mathcal{A}}$
that if $\mathcal{A}$ is classical, then $\mathbb{T}_{\mathcal{A}}$
is a collection of \emph{grid-based} (see \cite[Section 2]{vdh:transs_diff_alg})
transseries. In particular, the support of every $\sigma\in\mathbb{T}_{\mathcal{A}}\cap\mathbb{R}\left\llbracket \left(Y^{-1}\right)^{*}\right\rrbracket $
is a finitely generated $\mathbb{N}$-lattice. Now, if $f\left(y\right)=\zeta\circ\log\left(y\right)$,
then $\widehat{f}\left(Y\right)=\sum_{n}Y^{-\log n}$ (because $f\left(y^{-1}\right)$
is a convergent generalized power series, hence $f$ coincides with
its transasymptotic expansion $\widehat{f}$), whose support is not
finitely generated.
\end{proof}
In particular, we just gave another proof of the fact that $\zeta\restriction\left(1,+\infty\right)$
is not definable in $\mathbb{R}_{\mathcal{G},\exp}$ \cite[Corollary 10.11]{vdd:speiss:multisum}.
\begin{lem}
\label{lem: image of (gen) series}Let $\mathcal{A}$ be a GQC.
\begin{enumerate}
\item If $\mathcal{A}$ is classical, then $\mathbb{T}_{\mathcal{A}}\cap\mathbb{R}\left\llbracket Y^{-1}\right\rrbracket =\widehat{\mathcal{A}_{1}}$.
\item If $\mathcal{A}$ is not classical, then $\mathbb{T}_{\mathcal{A}}\cap\mathbb{R}\left\llbracket \left(Y^{-1}\right)^{*}\right\rrbracket =\widehat{\mathcal{A}_{1}}$.
\end{enumerate}
\end{lem}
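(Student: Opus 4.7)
The direction $\widehat{\mathcal{A}_1} \subseteq \mathbb{T}_{\mathcal{A}}$ (intersected with the appropriate Hahn subring) is immediate from the construction: for $a \in \mathcal{A}_1$, the germ $a(y^{-1})$ lies in $\mathcal{F}_0 \subseteq \mathcal{F}_{\mathcal{A}} = \mathcal{H}(\mathbb{R}_{\mathcal{A},\exp})$, and by the definition of $\phi_0$ together with formula \eqref{eq: phi embedding}, one has $\phi(a(y^{-1})) = \widehat{a}(Y^{-1})$. This transseries is a (generalized) power series in $Y^{-1}$, with support in $Y^{-\mathbb{N}}$ in the classical case and in $Y^{-[0,+\infty)}$ otherwise.

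The plan for the reverse inclusion is to take $\sigma \in \mathbb{T}_{\mathcal{A}}$ whose support is contained in $Y^{-\mathbb{N}}$ (case (1)) or $Y^{-[0,+\infty)}$ (case (2)), write $\sigma = \phi(f)$ for the unique $f \in \mathcal{F}_{\mathcal{A}}$, and produce $a \in \mathcal{A}_1$ with $\widehat{a}(Y^{-1}) = \sigma$. The first step is to show $f \in \mathcal{F}_0$. Apply the Monomialization Lemma~\ref{lem: monomialization-1} (and Corollary~\ref{cor: monom of F}) to write $f = \mathfrak{m}_0 U(\overline{\mathfrak{m}})$ with $U \in \mathcal{A}^{\times}$, $\mathfrak{m}_0 \in \mathcal{M}_{\mathcal{A}}$ and $\overline{\mathfrak{m}} \subseteq \mathcal{M}_{\mathcal{A}}^{<1}$, so that $\phi(f) = \phi(\mathfrak{m}_0)\widehat{U}(\phi(\overline{\mathfrak{m}}))$. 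Since $\mathrm{lm}(\phi(f)) = \phi(\mathfrak{m}_0)$ is a pure power of $Y^{-1}$ and $\phi$ is injective on $\mathcal{M}_{\mathcal{A}}$, one deduces $\mathfrak{m}_0 = y^{-r_0} \in \mathcal{M}_0$. One then argues that each $\mathfrak{m}_i \in \overline{\mathfrak{m}}$ contributing effectively to $\widehat{U}$ must also lie in $\mathcal{M}_0$: any nontrivial exponential or iterated-logarithmic factor in $\phi(\mathfrak{m}_i)$ would produce monomials outside $Y^{\mathbb{R}}$ in $\mathrm{Supp}(\widehat{U}(\phi(\overline{\mathfrak{m}})))$, since distinct LE-monomials in $G^{LE}$ are multiplicatively independent modulo $Y^{\mathbb{R}}$, contradicting the hypothesis on $\sigma$. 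This reduces us to $f \in \mathcal{F}_0$.

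Once $f \in \mathcal{F}_0$, write $f = y^{r_0} b(y^{-s_1}, \ldots, y^{-s_\ell})$ with $b \in \mathcal{A}_\ell$, $r_0 \in \mathbb{K}$, and $s_i \in \mathbb{K}_{>0}$; the hypothesis that $\phi(f)$ has no monomials $\succ 1$ forces $r_0 \leq 0$. Using the closure of $\mathcal{A}$ under (generalized) ramification and the restricted composition operations recorded in \cite[1.15]{rolin_servi:qeqa}, one assembles $a \in \mathcal{A}_1$ with $\widehat{a}(X_1) = X_1^{-r_0}\widehat{b}(X_1^{s_1}, \ldots, X_1^{s_\ell})$, so that $f = a(y^{-1})$ and $\phi(f) = \widehat{a}(Y^{-1}) \in \widehat{\mathcal{A}_1}$; in the classical case, $r_0 \in \mathbb{Z}$ and $s_i \in \mathbb{N}^*$ ensure $\widehat{a} \in \mathbb{R}\llbracket X_1 \rrbracket$. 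The main obstacle is the first step, ruling out essential exponential or iterated-logarithmic contributions to $f$; this requires careful bookkeeping over the inductive tower $\mathcal{M}_{-1} \subsetneq \mathcal{M}_0 \subsetneq \mathcal{M}_1 \subsetneq \cdots$ and its $\phi$-images in $G^{LE}$, exploiting the asymptotic independence of LE-monomials at each level.
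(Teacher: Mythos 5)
Your proposal follows the same route as the paper's (very terse) proof: show that $\mathbb{T}_{\mathcal{A}}\cap\mathbb{R}\llbracket(Y^{-1})^{(*)}\rrbracket\subseteq\phi(\mathcal{F}_0)$ and then appeal to the closure of $\mathcal{A}$ under ramification and monomial composition to identify $\phi(\mathcal{F}_0)\cap\mathbb{R}\llbracket(Y^{-1})^{(*)}\rrbracket$ with $\widehat{\mathcal{A}_1}$. You supply more detail than the paper does, and the easy inclusion $\widehat{\mathcal{A}_1}\subseteq\mathbb{T}_{\mathcal{A}}$ is handled correctly.

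However, the crucial reduction to $\mathcal{F}_0$ has a genuine gap. After monomializing $f=\mathfrak{n}_0 U(\overline{\mathfrak{n}})$ you correctly get $\mathfrak{n}_0\in\mathcal{M}_0$ from the leading monomial, but the next assertion --- that each $\mathfrak{n}_i$ ``contributing effectively'' to $\widehat U$ must lie in $\mathcal{M}_0$, ``since distinct LE-monomials in $G^{LE}$ are multiplicatively independent modulo $Y^{\mathbb{R}}$'' --- is not justified and is in fact false as stated. Monomialization (Lemma~\ref{lem: monomialization-1}) only eliminates \emph{pairwise} dependencies of the form $\mathfrak{m}_i^{\alpha}=\mathfrak{m}_j^{\beta}$, not joint multiplicative dependencies modulo $\mathcal{M}_0$. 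A concrete obstruction: take $\mathfrak{n}_1=y^{-1}\log y$ and $\mathfrak{n}_2=y^{-1}(\log y)^{-1}$; both lie in $\mathcal{M}_{\mathcal{A}}^{<1}$, neither is in $\mathcal{M}_0$, yet $\mathfrak{n}_1\mathfrak{n}_2=y^{-2}\in\mathcal{M}_0$, so a unit such as $U(X_1,X_2)=1+X_1X_2$ evaluated at $(\mathfrak{n}_1,\mathfrak{n}_2)$ has support entirely inside $Y^{\mathbb{R}}$. The Lemma's conclusion is of course not contradicted here (one rewrites $U(\mathfrak{n}_1,\mathfrak{n}_2)$ as $V(y^{-2})$), but your argument would incorrectly deduce $\mathfrak{n}_1,\mathfrak{n}_2\in\mathcal{M}_0$. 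What is missing is a further normalization step --- a change of variables/blow-up in the spirit of \cite[1.15]{rolin_servi:qeqa} --- that replaces $\overline{\mathfrak{n}}$ by a tuple whose ``exotic parts'' (components transverse to $\mathcal{M}_0$) are jointly multiplicatively independent, so that a monomial $\prod_i\phi(\mathfrak{n}_i)^{\alpha_i}$ can fall in $Y^{\mathbb{R}}$ only when all $\alpha_i$ with $\mathfrak{n}_i\notin\mathcal{M}_0$ vanish. You flag the ``careful bookkeeping'' needed but do not actually perform it, so as written the reduction to $\mathcal{F}_0$ is incomplete; similarly, in the classical case the final step asserts $r_0\in\mathbb{Z}$ and $s_i\in\mathbb{N}^*$ without argument, whereas a priori these are only rationals and one must use the integrality of the support of $\phi(f)$ together with closure under ramification to obtain an honest single-variable germ with integer exponents.
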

\begin{proof}
It is clear from the construction of $\mathcal{F}_{\mathcal{A}}$
that $\mathbb{T}_{\mathcal{A}}\cap\mathbb{R}\left\llbracket \left(Y^{-1}\right)^{\left(*\right)}\right\rrbracket \subseteq\phi\left(\mathcal{F}_{0}\right)$
(where we omit $*$ if $\mathcal{A}$ is a classical quasianalytic
class).
\end{proof}
\begin{cor}
\label{cor: applications}$\ $
\begin{enumerate}
\item $\Gamma\restriction\left(0,+\infty\right)$ is not definable in $\mathbb{R}_{\mathrm{an}^{*},\exp}$.
\item Recall the definition of $\mathbb{R}_{\mathcal{G}^{*}}$ in \cite{rss:multisummability_generalized}.
If $f\in\mathcal{H}\left(\mathbb{R}_{\mathcal{G}^{*},\exp}\right)$
and $\phi\left(f\right)\in\mathbb{R}\left\llbracket \left(Y^{-1}\right)^{*}\right\rrbracket $
then $\phi\left(f\right)$ is generalized multisummable in the positive
real direction.
\item Let $\mathcal{B}\subseteq\mathrm{an}$ be a collection of restricted
analytic functions such that $\exp\restriction\left[0,1\right]\in\mathcal{B}$.
Let $f\in\mathcal{H}\left(\mathbb{R}_{\mathcal{B},\exp}\right)$ such
that $f\left(y^{-1}\right)$ is analytic at zero. Then $f\in\mathcal{H}\left(\mathbb{R}_{\mathcal{B}}\right)$. 
\end{enumerate}
\end{cor}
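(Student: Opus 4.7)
The unifying idea is that Lemma~\ref{lem: image of (gen) series} sharply identifies which (generalized) formal power series in $Y^{-1}$ can arise as $\phi(f)$ for some $f\in\mathcal{H}(\mathbb{R}_{\mathcal{A},\exp})$: they are exactly the series in $\widehat{\mathcal{A}_1}$. Each of the three parts applies this identification in a different direction, exploiting the fact that $\phi$ produces a genuine transasymptotic expansion (Main Theorem).

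For (1), I would argue by contradiction. Assume $\Gamma\restriction(0,+\infty)$ is definable in $\mathbb{R}_{\text{an}^*,\exp}$; then so is $\log\Gamma$ on a neighbourhood of $+\infty$, and hence the germ
\[
h(y) := \log\Gamma(y) - \bigl(y - \tfrac{1}{2}\bigr)\log y + y - \tfrac{1}{2}\log(2\pi)
\]
belongs to $\mathcal{H}(\mathbb{R}_{\text{an}^*,\exp})$. Because $\phi$ provides a transasymptotic expansion, Stirling's classical asymptotic formula forces
\[
\phi(h) \;=\; \sum_{n\geq 1}\frac{B_{2n}}{2n(2n-1)}\,Y^{-(2n-1)} \;\in\; \mathbb{R}\llbracket Y^{-1}\rrbracket \;\subseteq\; \mathbb{R}\llbracket (Y^{-1})^*\rrbracket.
\]
By Lemma~\ref{lem: image of (gen) series}(2), this series must then lie in $\widehat{\text{an}^*_1}$, i.e.\ be a convergent generalized power series. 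This contradicts the well-known divergence of Stirling's series (the Bernoulli numbers grow factorially).

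For (2), the hypothesis $\phi(f)\in\mathbb{R}\llbracket(Y^{-1})^*\rrbracket$ combined with Lemma~\ref{lem: image of (gen) series}(2) applied to $\mathcal{A}=\mathcal{G}^*$ immediately gives $\phi(f)\in\widehat{\mathcal{G}^*_1}$, and by construction of $\mathcal{G}^*$ in \cite{rss:multisummability_generalized} the series in $\widehat{\mathcal{G}^*_1}$ are precisely those that are generalized multisummable in the positive real direction. For (3), since $f(y^{-1})$ is analytic at zero, the uniqueness of the transasymptotic expansion forces $\phi(f)$ to be the convergent Taylor series of $f(y^{-1})$ at zero, hence $\phi(f)\in\mathbb{R}\llbracket Y^{-1}\rrbracket$. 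Since $\mathcal{B}\subseteq\text{an}$ is classical, Lemma~\ref{lem: image of (gen) series}(1) produces $g\in\mathcal{B}_1$ with $\widehat{g}=\phi(f)$. Both $g$ and $f(y^{-1})$ are analytic at zero with the same Taylor series, hence agree as germs; therefore $f(y)=g(y^{-1})$ for large $y$, and since multiplicative inverse is an $\mathcal{L}_\mathcal{B}$-term, $f\in\mathcal{H}(\mathbb{R}_\mathcal{B})$.

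The main point to be careful about is the identification of $\phi(h)$ in (1) and of $\phi(f)$ in (3) with the expected formal series: this is exactly where the Main Theorem's transasymptotic expansion property is used, so that analytic data on the germ side (Stirling's expansion; a convergent Taylor expansion) translate verbatim into formal data on the transseries side, making Lemma~\ref{lem: image of (gen) series} applicable. Once this translation is in place, all three assertions follow from Lemma~\ref{lem: image of (gen) series} without further work.
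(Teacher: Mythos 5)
Your argument follows the paper's approach closely in all three parts. For (1), both you and the paper invoke Binet's second formula, identify the transasymptotic expansion of the Binet remainder with Stirling's divergent series, and contradict Lemma~\ref{lem: image of (gen) series}(2), which forces any element of $\mathbb{T}_{\mathrm{an}^{*}}\cap\mathbb{R}\left\llbracket\left(Y^{-1}\right)^{*}\right\rrbracket$ to be a convergent generalized power series. For (2), both arguments are identical applications of Lemma~\ref{lem: image of (gen) series}(2). Your proof of (3) is more detailed than the paper's one-sentence argument, and the germ-identification step at the end ($g$ and $f(y^{-1})$ both analytic at $0$ with the same Taylor series, hence equal germs, hence $f = g(y^{-1})$) correctly fills in what the paper leaves implicit.

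However, there is one genuine gap in your proof of (3). You write that ``Since $\mathcal{B}\subseteq\mathrm{an}$ is classical, Lemma~\ref{lem: image of (gen) series}(1) produces $g\in\mathcal{B}_{1}$ with $\widehat{g}=\phi(f)$.'' But $\mathcal{B}$ as given is merely ``a collection of restricted analytic functions'' containing $\exp\restriction[0,1]$; it is \emph{not} assumed to be a generalized quasianalytic class, and Lemma~\ref{lem: image of (gen) series} is stated and proved only for GQCs (and the whole construction of $\phi$ and $\mathbb{T}_{\mathcal{A}}$ presupposes a GQC). The paper addresses this explicitly: one first replaces $\mathcal{B}$ by the GQC $\mathcal{B}'\subseteq\mathrm{an}$ obtained by closing $\mathcal{B}$ under the operations of \cite[1.8, 1.15]{rolin_servi:qeqa}. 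Since those operations are definable in $\mathbb{R}_{\mathcal{B}}$, the structures $\mathbb{R}_{\mathcal{B}}$ and $\mathbb{R}_{\mathcal{B}'}$ coincide, so proving $f\in\mathcal{H}(\mathbb{R}_{\mathcal{B}'})$ suffices, and Lemma~\ref{lem: image of (gen) series}(1) then applies to $\mathcal{B}'$, which is a classical GQC consisting of analytic germs. Your argument goes through verbatim once this substitution is inserted; without it, the appeal to Lemma~\ref{lem: image of (gen) series} is not licensed. As a minor secondary remark, in both (1) and (3) the claim that $\phi(h)$ (resp.\ $\phi(f)$) \emph{equals} the asymptotic series rather than merely \emph{begins with} it deserves a word: a priori $\phi(h)$ could carry additional terms below all powers of $Y^{-1}$. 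The argument survives because truncation-closedness of $\phi$ makes the power-series part of $\phi(h)$ itself an element of $\mathbb{T}_{\mathcal{A}}\cap\mathbb{R}\left\llbracket\left(Y^{-1}\right)^{*}\right\rrbracket$, so Lemma~\ref{lem: image of (gen) series} applies to the truncation; the paper is equally terse on this point, so this is a shared elision rather than a flaw specific to your proposal.
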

\begin{proof}
$\ $
\begin{enumerate}
\item By Binet's second formula, if $\Gamma\restriction\left(0,+\infty\right)$
were definable in $\mathbb{R}_{\mathrm{an}^{*},\exp}$, then so would
the function
\[
\log\left(\Gamma\left(\frac{1}{x}\right)\right)+\left(\frac{1}{x}-\frac{1}{2}\right)\log x+\frac{1}{x}-\frac{1}{2}\log\left(2\pi\right).
\]
The Taylor expansion at $0^{+}$ of the above function is the Stirling
series, which is divergent (see for example \cite[12.33]{whittaker-watson}).
But by Lemma \ref{lem: image of (gen) series}, $\mathbb{T}_{\mathrm{an}^{*}}\cap\mathbb{R}\left\llbracket \left(Y^{-1}\right)^{*}\right\rrbracket =\mathbb{R}\left\{ \left(Y^{-1}\right)^{*}\right\} $,
so the Stirling series is not in the image of $\phi$.
\item This is an immediate consequence of Lemma \ref{lem: image of (gen) series}(2).
\item It suffices to replace $\mathcal{B}$ by the GQC obtained from $\mathcal{B}$
by closing under the (clearly definable) operations listed in \cite[1.8 and 1.15]{rolin_servi:qeqa},
and to apply Lemma \ref{lem: image of (gen) series}(1). 
\end{enumerate}
\end{proof}
\begin{example}
In particular, Corollary \ref{cor: applications}(2) implies that
no solution $y\left(x\right)$ to Euler's differential equation $x^{2}y'=y-x$
for $x>0$ is definable in $\mathbb{R}_{\mathcal{G}^{*},\exp}$: if
this were the case, then since $\phi$ is truncation closed, there
would be one such solution whose transasymptotic expansion is exactly
Euler's series, which is not generalized multisummable in the positive
real direction (see for example \cite[Section B5]{Loray_series_divergentes}).
\end{example}
\bibliographystyle{alpha}

\begin{thebibliography}{DMM01}
	
	\bibitem[ADH17]{adh-bigbook}
	M.~Aschenbrenner, L.~{van den} Dries, and {J. van der} Hoeven.
	\newblock {\em Asymptotic differential algebra and model theory of
		transseries}, volume 195 of {\em Annals of Mathematics Studies}.
	\newblock Princeton University Press, Princeton, NJ, 2017.
	
	\bibitem[DD88]{vdd:d}
	J.~Denef and {L. van den} Dries.
	\newblock $p$-adic and real subanalytic sets.
	\newblock {\em Ann. of Math. (2)}, 128(1):79--138, 1988.
	
	\bibitem[DMM94]{dmm:exp}
	{L. van den} Dries, A.~Macintyre, and D.~Marker.
	\newblock The elementary theory of restricted analytic fields with
	exponentiation.
	\newblock {\em Ann. of Math. (2)}, 140(1):\ 183--205, 1994.
	
	\bibitem[DMM97]{dmm:series}
	{L. van den} Dries, A.~Macintyre, and D.~Marker.
	\newblock Logarithmic-exponential power series.
	\newblock {\em J. London Math. Soc. (2)}, 56(3):417--434, 1997.
	
	\bibitem[DMM01]{vdd_macintyre_marker:log-exp_series}
	{L. van den} Dries, A.~Macintyre, and D.~Marker.
	\newblock Logarithmic-exponential series.
	\newblock In {\em Proceedings of the {I}nternational {C}onference ``{A}nalyse
		\& {L}ogique'' ({M}ons, 1997)}, volume 111, pages 61--113, 2001.
	
	\bibitem[DS98]{vdd:speiss:gen}
	{L. van den} Dries and P.~Speissegger.
	\newblock The real field with convergent generalized power series.
	\newblock {\em Trans. Amer. Math. Soc.}, 350(11):4377--4421, 1998.
	
	\bibitem[DS00]{vdd:speiss:multisum}
	{L. van den} Dries and P.~Speissegger.
	\newblock The field of reals with multisummable series and the exponential
	function.
	\newblock {\em Proc. London Math. Soc. (3)}, 81(3):513--565, 2000.
	
	\bibitem[Hoe06]{vdh:transs_diff_alg}
	{J. van der} Hoeven.
	\newblock {\em Transseries and real differential algebra}, volume 1888 of {\em
		Lecture Notes in Mathematics}.
	\newblock Springer-Verlag, Berlin, 2006.
	
	\bibitem[KRS09]{krs}
	T.~Kaiser, J.-P. Rolin, and P.~Speissegger.
	\newblock Transition maps at non-resonant hyperbolic singularities are
	o-minimal.
	\newblock {\em J. Reine Angew. Math.}, 636:1--45, 2009.
	
	\bibitem[KS19]{kaiser_speissegger:analytic_continuation}
	T.~Kaiser and P.~Speissegger.
	\newblock Analytic continuations of {$\log$}-{$\exp$}-analytic germs.
	\newblock {\em Trans. Amer. Math. Soc.}, 371(7):5203--5246, 2019.
	
	\bibitem[Lor98]{Loray_series_divergentes}
	F.~Loray.
	\newblock {\em Analyse des s{\'e}ries divergentes}, volume Quelques aspects des
	math{\'e}matiques actuelles of {\em Math{\'e}matiques pour le 2e cycle}.
	\newblock Ellipses, 1998.
	
	\bibitem[Neu49]{neumann:ordered_division_rings}
	B.~H. Neumann.
	\newblock On ordered division rings.
	\newblock {\em Trans. Amer. Math. Soc.}, 66:202--252, 1949.
	
	\bibitem[RS15]{rolin_servi:qeqa}
	J.-P. Rolin and T.~Servi.
	\newblock Quantifier elimination and rectilinearization theorem for generalized
	quasianalytic algebras.
	\newblock {\em Proc. Lond. Math. Soc. (3)}, 110(5):1207--1247, 2015.
	
	\bibitem[RSS07]{rss}
	J.-P. Rolin, F.~Sanz, and R.~Sch{\"a}fke.
	\newblock Quasi-analytic solutions of analytic ordinary differential equations
	and o-minimal structures.
	\newblock {\em Proc. London Math. Soc.}, 95(2):413--442, 2007.
	
	\bibitem[RSS23]{rss:multisummability_generalized}
	J.-P. Rolin, T.~Servi, and P.~Speissegger.
	\newblock Multisummability for generalized power series.
	\newblock {\em Canad. J. Math.}, pages 1--37, 2023.
	
	\bibitem[RSW03]{rsw}
	{J.-P.} Rolin, P.~Speissegger, and {A. J.} Wilkie.
	\newblock Quasianalytic {D}enjoy-{C}arleman classes and o-minimality.
	\newblock {\em J. Amer. Math. Soc.}, 16(4):751--777, 2003.
	
	\bibitem[WW27]{whittaker-watson}
	E.~T. Whittaker and G.~N. Watson.
	\newblock {\em A course of modern analysis}.
	\newblock Cambridge University Press, Cambridge, 1927.
	
\end{thebibliography}
\def\cprime{$'$}

\end{document}